\def\e{{\mathbf e}}
\def\x{{\mathbf x}}
\def\y{{\mathbf y}}
\newcommand{\uno}{{\mathbf{1}}}
\newcommand{\R}{\mathbf{R}}
\newcommand{\Z}{\mathbf{Z}}
\newcommand{\thbo}{{\rm TH}}
\newcommand{\stab}{{\rm STAB}}
\newcommand{\LS}{{\rm LS}}
\newcommand{\estab}{{\rm ESTAB}}
\newcommand{\fra}{{\rm ESTAB}}
\newcommand{\qstab}{{\rm QSTAB}}
\newcommand{\astab}{{\rm ASTAB}}
\newcommand{\conv}{\mathrm{conv}}
\DeclareMathOperator{\cone}{cone}
\DeclareMathOperator{\diag}{diag}
\newcommand{\set}[1]{\left\{#1\right\}}
\newtheorem{myclaim}{Claim}%[section]
\begin{document}

\title{Lov\'asz-Schrijver PSD-operator  on Claw-Free Graphs
\thanks{This work was supported by 
a MATH-AmSud cooperation (PACK-COVER), PID-CONICET 0277, and PICT-ANPCyT 0586.}}
\author{Silvia Bianchi\inst{1} \and Mariana Escalante\inst{1,2} \and Graciela Nasini\inst{1,2} \and Annegret Wagler\inst{3}\\ \vspace{0.6pt} \email{\{sbianchi,mariana,nasini\}@fceia.unr.edu.ar, wagler@isima.fr}}
\institute{FCEIA, 
Universidad Nacional de Rosario, Rosario, Argentina 
\and CONICET 
\and LIMOS (UMR 6158 CNRS), University Blaise Pascal, 
Clermont-Ferrand, France}

\maketitle

\begin{abstract}
The subject of this work is the study  of  $\LS_+$-perfect graphs defined as those graphs $G$ for which 
the stable set polytope
$\stab(G)$ is achieved in one iteration of  Lov\'asz-Schrijver PSD-operator $\LS_+$, applied to its edge relaxation $\estab(G)$.  In particular, we look for a 
polyhedral relaxation of $\stab(G)$ that coincides with 
$\LS_+(\estab(G))$   
and $\stab(G)$ if and only if $G$ is $\LS_+$-perfect. 
An according conjecture has been recently formulated ($\LS_+$-Perfect Graph Conjecture); here we verify it for the well-studied class of claw-free graphs.

\begin{keywords}
stable set polytope, $\LS_+$-perfect graphs, claw-free graphs
\end{keywords}
%\keywords{stable set polytope \and $\LS_+$-perfect graphs \and claw-free graphs}
% \PACS{PACS code1 \and PACS code2 \and more}
% \subclass{MSC code1 \and MSC code2 \and more}
\end{abstract}

%\section{Introduction}
%
%\maketitle
%\begin{abstract}
%\end{abstract}

%\begin{keywords}
%
%\end{keywords}

\section{Introduction}

The context of this work is the study of the stable set polytope, some of its linear and semi-definite relaxations, and graph classes for which certain relaxations are tight. 
Our focus lies on those graphs where a single application of the Lov\'asz-Schrijver positive semi-definite operator introduced in \cite{LovaszSchrijver1991} to the edge relaxation yields the stable set polytope. 

The \textit{stable set polytope} $\stab(G)$ of a graph $G=(V,E)$ is defined as the convex hull of the incidence vectors of all stable sets of $G$ (in a stable set all nodes are mutually nonadjacent). 
Two canonical relaxations of $\stab(G)$ are the \textit{edge constraint stable set polytope} 
$$
\estab(G) = \{\mathbf{x} \in [0,1]^V: x_i + x_j \: \le \: 1, ij \in E \}, 
$$
and the \textit{clique constraint stable set polytope} 
$$
\qstab(G) = \{\mathbf{x} \in [0,1]^V: %x(Q) = 
\sum_{i \in Q} \, x_i \: \le \: 1,\ Q \subseteq V\ \mbox{ maximal clique of $G$} \}
$$
(in a clique all nodes are mutually adjacent, hence a clique and a stable set share at most one node). We have 
$\stab(G) \subseteq \qstab(G) \subseteq \estab(G)$ 
for any graph, where $\stab(G)$ equals $\estab(G)$ for bipartite graphs, and $\qstab(G)$ for perfect graphs only \cite{Chvatal1975}.  

According to a famous characterization achieved by Chudnovsky et al.~\cite{ChudnovskyEtAl2006}, perfect graphs are precisely the graphs without chordless cycles $C_{2k+1}$ with $k \geq 2$, termed \textit{odd holes}, or their complements, the \textit{odd antiholes} $\overline C_{2k+1}$  as node induced subgraphs %Recall that 
(where the complement $\overline G$ has the same nodes as $G$, but two nodes are adjacent in $\overline G$ if and only if they are non-adjacent in $G$).  
Then, odd holes and odd antiholes are the only \textit{minimally imperfect graphs}. 

Perfect graphs turned out to be an interesting and important class with a rich structure and a nice algorithmic behavior \cite{GrotschelLovaszSchrijver1988}.  
However, solving the stable set problem for a perfect graph $G$ by maximizing a linear objective function
over $\qstab(G)$ does not work directly \cite{GrotschelLovaszSchrijver1981}, but only via a detour involving a geometric representation of graphs \cite{Lovasz1979} and 
the resulting semi-definite relaxation
%a semi-definite relaxation %the resulting \textit{theta-body} 
$\thbo(G)$ introduced in %by Lov\'asz et al.~
\cite{GrotschelLovaszSchrijver1988}. 

For some $N \in \Z_+$, an orthonormal representation of a graph $G=(V,E)$ is a sequence $(\mathbf{u_i} : i \in V)$ of $|V|$ unit-length vectors $\mathbf{u_i} \in \R^N$, %%%where $N$ is some positive integer, 
such that $\mathbf{u_i}^T\mathbf{u_j} = 0$ for all $ij \not\in E$. For any orthonormal representation of $G$ and any additional unit-length vector $\mathbf{c} \in \R^N$, the %corresponding 
orthonormal representation constraint is $\sum_{i \in V} (\mathbf{c}^T\mathbf{u_i})^2 x_i \leq 1$. $\thbo(G)$ denotes the convex set of all vectors $\mathbf{x} \in \R_+^{|V|}$ satisfying all orthonormal representation constraints for $G$. 
For any graph $G$, 
\[
\stab(G) \subseteq \thbo(G) \subseteq \qstab(G)
\]
holds and approximating a linear objective function over $\thbo(G)$ can be done with arbitrary precision in polynomial time \cite{GrotschelLovaszSchrijver1988}.  Moreover, if $\thbo(G)$ is a rational polytope, an optimal solution can be obtained in polynomial time. This fact gives a great relevance to the
%Most notably, the same authors proved a 
beautiful characterization of perfect graphs  obtained by the same authors: 
\begin{equation} \label{equivalencias}
G \textrm{ is perfect} \Leftrightarrow \thbo(G)=\stab(G) \Leftrightarrow \thbo(G)=\qstab(G).
%\begin{array}{rcl}
%G \textrm{ is perfect} & \Leftrightarrow & \thbo(G)=\stab(G) \\
%                       & \Leftrightarrow & \thbo(G)=\qstab(G) \\
%                       & \Leftrightarrow & \thbo(G) \textrm{ is polyhedral.} \\
%\end{array}
\end{equation}
%which even shows that optimizing a linear function on polyhedral $\thbo(G)$ can be done in polynomial time.
%
For all imperfect graphs, %$G$ it follows that 
$\stab(G)$ does not coincide with any of the above relaxations. It is, thus, natural to study further relaxations and to combinatorially characterize those graphs where $\stab(G)$ equals one of them.

\paragraph{Linear relaxations and related graphs.} 
A natural generalization of the clique constraints %describing $\qstab(G)$ 
are rank constraints associated with arbitrary induced subgraphs $G' \subseteq G$. 
By the choice of the right hand side $\alpha(G')$, denoting the size of a largest stable set in $G'$, rank constraints 
$$
\mathbf{x}(G') = \sum_{i \in G'} \, x_i \, \le \, \alpha(G') 
$$ 
are  valid for $\stab(G)$.

A graph $G$ is called \textit{rank-perfect} by \cite{Wagler2000} if and only if $\stab(G)$ is described by rank constraints only. 

By definition, rank-perfect graphs include all perfect graphs. 
%%In general, 
By restricting the facet set to rank constraints associated with certain subgraphs, several well-known graph classes are defined, e.g., 
\textit{near-perfect graphs} \cite{Shepherd1994} where only rank constraints associated with cliques and the whole graph are  allowed, or \textit{t-perfect} \cite{Chvatal1975} and \textit{h-perfect graphs} \cite{GrotschelLovaszSchrijver1988} where rank constraints associated with edges, triangles and odd holes resp. cliques of arbitrary size and odd holes suffice. 
%edges and odd cycles resp. cliques and odd holes are required.  

As common generalization of perfect, t-perfect, and h-perfect graphs, the class of \textit{a-perfect graphs} was introduced in \cite{Wagler2004_4OR} as graphs $G$ where $\stab(G)$ is given by %nonnegativity constraints and 
rank constraints associated with antiwebs. 
An \textit{antiweb} $A^k_n$ is a graph with $n$ nodes $0, \ldots, n-1$ and edges $ij$ if and only if $k \leq |i-j| \leq n-k$ and $i \neq j$. 
Antiwebs include all complete graphs $K_n = A^1_n$, odd holes $C_{2k+1} = A^k_{2k+1}$, and their complements $\overline C_{2k+1} = A^2_{2k+1}$. 
Antiwebs are $a$-perfect by \cite{Wagler2004_4OR}, 
further examples of $a$-perfect graphs were found in \cite{Wagler2005_4OR}.%} 

A more general type of inequalities is obtained from complete joins of antiwebs, called \textit{joined antiweb constraints}
\[
\sum_{i \leq k} \frac{1}{\alpha(A_i)} x(A_i) + x(Q) \leq 1, 
\]
associated with the join of some antiwebs $A_1, \ldots, A_k$ and a clique $Q$ 
(note that the inequality is scaled to have right hand side 1). This includes, e.g., all odd (anti)wheels (the join of a single node with an odd (anti)hole). 
We denote the linear relaxation of $\stab(G)$ obtained by all joined antiweb constraints by $\astab^*(G)$. By construction, we see that 
$$\stab(G) \subseteq \astab^*(G) \subseteq \qstab(G) \subseteq \estab(G).$$
In \cite{CPW_2009}, a graph $G$ is called \textit{joined a-perfect} if and only if $\stab(G)$ coincides with $\astab^*(G)$. 
Besides all a-perfect graphs, further examples of joined a-perfect graphs are %the complements of line graphs and 
\textit{near-bipartite graphs} (where the non-neighbors of every node induce a bipartite graph) due to \cite{Shepherd1995}. 

\paragraph{A semi-definite relaxation and $\LS_+$-perfect graphs.} 
In the early nineties, Lov\'{a}sz and Schrijver introduced the PSD-operator $\LS_+$ 
(called $N_+$ in \cite{LovaszSchrijver1991}) which, applied to %the edge relaxation 
$\estab(G)$, generates a positive semi-definite relaxation of $\stab(G)$ stronger than $\thbo(G)$ (see Section \ref{Sec:N+perfect} for details). 
In order to simplify the notation we write $\LS_+(G)=\LS_+(\fra(G))$. 

As in the case of perfect graphs, the stable set problem can be solved in polynomial time for the class of graphs for which $\LS_+(G)=\stab(G)$ by \cite{LovaszSchrijver1991}. 
These graphs are called \emph{$\LS_+$-perfect}, and all other graphs \emph{$\LS_+$-imperfect} (note that they are also called $N_+$-(im)perfect, see e.g. \cite{BENT2011}). 

In addition, 
every subgraph of an $\LS_+$-perfect graph is also $\LS_+$-perfect. 
This motivates the definition of \emph{minimally} $\LS_+$-\emph{imperfect graphs} as the $\LS_+$-imperfect graphs whose proper induced subgraphs are all $\LS_+$-perfect. 
The two smallest of such graphs (regarding its number of nodes) were found by \cite{EMN2006} and \cite{LiptakTuncel2003} and  are depicted in Figure \ref{grafos6}. 

\begin{figure}
\begin{center}
\includegraphics[scale=1.0]{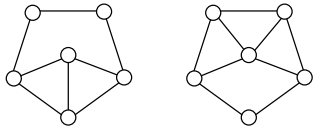}
\caption{The graphs $G_{LT}$ (on the left) and $G_{EMN}$ (on the right).}
\label{grafos6}
\end{center}
\vspace*{-5mm}
\end{figure}

In \cite{BENT2011}, 
the authors look for a characterization of $\LS_+$-perfect graphs similar to the characterization (\ref{equivalencias}) for perfect graphs: they intend to find an appropriate polyhedral relaxation 
$P(G)$ of $\stab(G)$ such that 
$G$ is $\LS_+$-perfect if and only if $\stab(G)=P(G)$. 
A conjecture has been recently proposed in \cite{BENT2013}, which can be equivalently reformulated as follows \cite{ENW2014}:

\begin{conjecture}[$\LS_+$-Perfect Graph Conjecture] \label{conjecture}
A graph $G$ is $\LS_+$-perfect if and only if  $\stab(G)=\astab^*(G)$.
\end{conjecture}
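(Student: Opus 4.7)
The plan is to split Conjecture~\ref{conjecture} into its two implications and attempt each independently for an arbitrary graph $G$, without restricting to any class.

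For the ``if'' direction ($\stab(G)=\astab^*(G)$ implies $G$ is $\LS_+$-perfect), I would prove the stronger universal inclusion $\LS_+(G)\subseteq\astab^*(G)$ valid for \emph{every} graph. Granted this, the chain $\stab(G)\subseteq\LS_+(G)\subseteq\astab^*(G)=\stab(G)$ collapses and yields $\LS_+(G)=\stab(G)$. Establishing this inclusion reduces, since $\LS_+$ respects induced subgraphs, to showing that every joined antiweb inequality $\sum_{i\le k}\tfrac{1}{\alpha(A_i)}x(A_i)+x(Q)\le 1$ is $\LS_+$-valid on the graph on which it is defined. I would first handle a single antiweb $A^k_n$ by constructing an explicit PSD certificate from its canonical stable sets (recovering the Lov\'asz--Schrijver fact that antiweb rank inequalities are closed in one $\LS_+$-round). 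Complete joins with a clique $Q$ or with further antiwebs $A_j$ would then be handled by observing that universally adjacent vertices combine additively in moment matrices, so a certificate on the disjoint constituents can be assembled into one on the join.

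For the ``only if'' direction ($G$ $\LS_+$-perfect implies $\stab(G)=\astab^*(G)$), which is structurally deep, I would argue the contrapositive: any $G$ admitting a facet of $\stab(G)$ that is not a joined antiweb constraint must be $\LS_+$-imperfect. The plan is to isolate a minimal such $G$ (a minimally non--joined-$a$-perfect graph) and to exhibit inside it a point of $\LS_+(G)\setminus\stab(G)$. A natural route is to show that every such minimal $G$ contains a known $\LS_+$-imperfection witness, such as $G_{LT}$ or $G_{EMN}$ of Figure~\ref{grafos6}, as an induced subgraph, which suffices since $\LS_+$-perfection is hereditary. Equivalently, one would compile a complete catalogue of minimally $\LS_+$-imperfect graphs and check that their facet defining inequalities are never joined antiweb constraints, so any graph with a non-joined-antiweb facet must contain such a witness.

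The main obstacle is unambiguously the second direction, and it is precisely what keeps the conjecture open: there is no general classification of facets of $\stab(G)$ beyond joined antiwebs, nor a complete list of minimally $\LS_+$-imperfect graphs. A realistic program is therefore to proceed class by class---as the present paper does for claw-free graphs---settling the conjecture wherever facets of $\stab(G)$ are understood (near-bipartite graphs, quasi-line graphs, graphs of bounded stability number, etc.) and searching for a structural invariant that unifies these partial results into a proof in full generality. The polyhedral containment $\LS_+(G)\subseteq\astab^*(G)$ behind the first direction, by contrast, looks tractable for arbitrary $G$ by a careful inductive construction of PSD certificates.
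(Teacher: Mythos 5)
You have correctly recognized that the statement is a conjecture rather than a theorem: the paper offers no proof of it in general, only a verification for claw-free graphs (Theorem \ref{thm_main}), so there is no complete argument of the paper's to compare yours against. Your treatment of the ``if'' direction is sound and matches how the paper frames the conjecture: the inclusion $\LS_+(G)\subseteq\astab^*(G)$ of \eqref{LS1} is quoted from Lov\'asz and Schrijver, and together with $\stab(G)\subseteq\LS_+(G)$ it makes the implication $\stab(G)=\astab^*(G)\Rightarrow\LS_+(G)=\stab(G)$ immediate. You propose to re-derive \eqref{LS1} by building explicit PSD certificates for antiwebs and assembling them across complete joins; that is more than is needed (the paper simply cites the result), and the claim that certificates on the constituents of a join ``combine additively'' is the one step of that reconstruction that would require genuine care, but the direction itself is not in doubt.

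The genuine gap is the ``only if'' direction, and to your credit you say so explicitly. Your contrapositive program --- isolate a minimal graph with a facet of $\stab(G)$ that is not a joined antiweb constraint and exhibit inside it a known $\LS_+$-imperfection witness such as $G_{LT}$, $G_{EMN}$, $W^2_{10}$, or a node stretching of the first two --- is precisely the strategy the paper executes, but it can only be carried out where a complete facet description of $\stab(G)$ is available; the paper obtains one for claw-free graphs from the structural results surveyed in Section \ref{Sec:ClawFree} and then checks, subclass by subclass, that every facet-defining $\LS_+$-perfect graph is a clique, an odd (anti)hole, or a complete join of odd antiholes and a clique. Without such a description for arbitrary $G$, and without a classification of minimally $\LS_+$-imperfect graphs, the program cannot be completed in general --- which is exactly why the statement remains open. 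Your proposal should therefore be read as a correct proof of one implication together with an accurate research plan for the other, not as a proof of the statement.
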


In \cite{LovaszSchrijver1991} it is shown that, for every graph $G$ 
\begin{equation}\label{LS1}
\LS_+(G) \subseteq \astab^*(G).
\end{equation}

Thus, the conjecture states that $\LS_+$-perfect graphs coincide with joined a-perfect graphs 
and $\astab^*(G)$ is the %studied
 polyhedral relaxation of $\stab(G)$ playing the role of $P(G)$ in \eqref{equivalencias}.

Conjecture~\ref{conjecture} has been already verified for near-perfect graphs by \cite{BENT2011}, for \emph{fs-perfect graphs} (where the only facet-defining subgraphs are cliques and the graph itself) by \cite{BENT2013}, for \emph{webs} (the complements $W^k_n = \overline A^k_n$ of antiwebs) by \cite{EN2014} and for \emph{line graphs} (obtained by turning adjacent edges of a root graph into adjacent nodes of the line graph) by \cite{ENW2014}, see Section \ref{Sec:N+perfect} for details.

\paragraph{The $\LS_+$-Perfect Graph Conjecture for Claw-free graphs.}

The aim of this contribution is to verify Conjecture \ref{conjecture} for a well-studied graph class containing all webs, all line graphs and the complements of near-bipartite graphs: the class of \emph{claw-free graphs} (i.e., the graphs not containing  as node induced subgraph  the complete join of a single node and a stable set of size three). 

Claw-free graphs attracted much attention due to their seemingly asymmetric behavior w.r.t. the stable set problem. 
On the one hand, the first combinatorial algorithms to solve the problem in polynomial time for claw-free graphs \cite{Minty1980,Sbihi1980} date back to 1980. 
Therefore, the polynomial equivalence of optimization and separation due to \cite{GrotschelLovaszSchrijver1988} implies that it is possible to optimize over the stable set polytope of a claw-free graph in polynomial time. 
On the other hand, the problem of characterizing the stable set polytope of claw-free graphs in terms of an explicit description by means of a facet-defining system, originally posed in~\cite{GrotschelLovaszSchrijver1988}, was open for several decades. 
This motivated the study of claw-free graphs and its different subclasses, that finally answered this long-standing problem only recently (see Section \ref{Sec:ClawFree} for details).

The paper is organized as follows: 
In Section 2, we present the State-of-the-Art on $\LS_+$-perfect graphs (including families of $\LS_+$-imperfect graphs needed for the subsequent proofs) and on claw-free graphs, their relevant subclasses and the results concerning the facet-description of their stable set polytopes from the literature. 
In Section 3, we verify, relying on the previously presented results, Conjecture \ref{conjecture} for the studied subclasses of claw-free graphs. As a conclusion, we obtain as our main result:

\begin{theorem}\label{thm_main}
The $\LS_+$-Perfect Graph Conjecture is true for all claw-free graphs. 
\end{theorem}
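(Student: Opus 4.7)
The plan is to exploit the structure theory of claw-free graphs to reduce the theorem to subclasses for which the $\LS_+$-Perfect Graph Conjecture either has already been verified or can be established by a direct case check. Note first that one direction of Conjecture~\ref{conjecture} is immediate from \eqref{LS1}: if $\stab(G)=\astab^*(G)$, then $\stab(G)\subseteq \LS_+(G)\subseteq\astab^*(G)=\stab(G)$, so $G$ is $\LS_+$-perfect. Hence only the converse direction requires work, namely that every $\LS_+$-perfect claw-free $G$ satisfies $\stab(G)=\astab^*(G)$. Equivalently, contrapositively, we must show that every claw-free $G$ with $\stab(G)\subsetneq\astab^*(G)$ contains a minimally $\LS_+$-imperfect induced subgraph.

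The argument then proceeds by a case analysis along the subclass hierarchy of claw-free graphs recalled in Section~\ref{Sec:ClawFree}. Webs and line graphs are already handled in \cite{EN2014} and \cite{ENW2014}, respectively, and these two subclasses cover a large part of the quasi-line stratum. For a general quasi-line claw-free graph we translate the Eisenbrand--Oriolo--Stauffer--Ventura clique family description of $\stab(G)$ into the joined antiweb language: each clique family inequality is examined to see whether, after appropriate scaling, it coincides with a joined antiweb constraint valid for $\astab^*(G)$; the residual ``exotic'' clique family inequalities must then be shown to force the presence of a minimally $\LS_+$-imperfect induced subgraph. For claw-free graphs that are not quasi-line, we use the structure theorem to descend to simpler building blocks (line graphs of near-bipartite graphs, antiprismatic pieces, and a short list of small exceptional graphs) and verify the conjecture for each in turn, using the line graph result of \cite{ENW2014} as the base case.

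For every exotic facet, or every non-quasi-line building block which is not manifestly joined $a$-perfect, the strategy is to exhibit one of the known minimally $\LS_+$-imperfect graphs from Section~\ref{Sec:N+perfect} as an induced subgraph; the two small prototypes $G_{LT}$ and $G_{EMN}$ of Figure~\ref{grafos6} and the parametric families extending them will serve as the witnesses. The main obstacle I anticipate lies precisely in this witness step: matching every shape of exotic facet of $\stab(G)$ to an explicit minimally $\LS_+$-imperfect subgraph. The zoo of known minimally $\LS_+$-imperfect graphs is limited, while the list of potentially exotic facet types in claw-free graphs is dictated by the intricate decomposition; consequently, building a clean dictionary between these two lists, subclass by subclass, is the technical heart of the proof. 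Once this dictionary is in place, Theorem~\ref{thm_main} follows by traversing the subclasses of claw-free graphs and invoking, in each case, either a previously established instance of Conjecture~\ref{conjecture} or the witness produced by the dictionary.
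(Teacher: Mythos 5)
Your plan follows the same broad outline as the paper (case analysis over the subclass hierarchy, reuse of the web and line-graph results, clique family inequalities for the quasi-line stratum, and $G_{LT}$/$G_{EMN}$-type witnesses elsewhere), but as written it is a programme rather than a proof: the step you yourself flag as the ``main obstacle'' --- building the dictionary between exotic facets and minimally $\LS_+$-imperfect induced subgraphs --- is precisely where all of the paper's technical content lives, and nothing in the proposal carries it out. Two concrete gaps stand out. First, for quasi-line graphs your plan of ``examining each clique family inequality'' is missing the key external ingredient that makes the case tractable: the paper splits quasi-line graphs into semi-line graphs (handled by Theorem~\ref{hipomatch}) and fuzzy circular interval graphs, and for the latter invokes Stauffer's theorem that every facet-defining clique family inequality is associated with a web $W_n^{p-1}$ in $G$; this reduces the whole case to the web characterization of Theorem~\ref{webs} plus an argument that no node outside the web can survive (Theorem~\ref{thm_N+imperfect_fcig}). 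Without that reduction, ``examining'' the clique family inequalities in general has no obvious endpoint.

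Second, and more seriously, your reduction of the non-quasi-line case to ``simpler building blocks'' via the structure theorem does not work uniformly. The strip decomposition of~\cite{OPS2008} that the paper uses applies only when $\alpha(G)\geq 4$ (Theorem~\ref{thm_not_quasi-line_4}); for claw-free, not quasi-line graphs with $\alpha(G)=3$ no such decomposition is known, and this is exactly the class containing all the difficult non-rank facets. The paper is forced to handle it by a long direct combinatorial argument (Theorem~\ref{thm_not_quasi-line_3}, Claims~1--17), growing the graph outward from a node $v$ whose neighborhood contains an odd antihole and showing at every branch either a clique cutset (contradicting facet-definingness) or an induced $G_{LT}$, $G_{EMN}$, or a node stretching thereof. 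Your plan, which assumes a decomposition into near-bipartite line graphs, antiprismatic pieces, and small exceptions will cover everything, would stall precisely on this $\alpha=3$ case. You also omit the separate treatment of $\alpha(G)=2$ (Theorem~\ref{thm_N+imperfect_alpha2}), which the paper needs both as a base case and as a tool inside the fuzzy circular interval and $\alpha=3$ arguments.
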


We close with some further remarks and an outlook to future lines of research. 

\section{State-of-the-Art}
\label{Sec:N+perfect}

\subsection{About $\LS_+$-perfect graphs}

In order to introduce the $\LS_+$-operator we 
denote by $\e_0, \e_1, \dots, \e_n$ the vectors of the canonical basis of $\R^{n+1}$ (where  the first coordinate is indexed zero), $\uno$ the vector with all components equal to $1$ and $S_+^{n}$ the convex cone of symmetric and positive semi-definite $(n \times n)$-matrices with real entries.   
Let $K\subset [0,1]^n$ be  a  convex set and
\[
 \cone(K)= \left\{\left(\begin{array}{c} x_0\\ \x \end{array}
 \right) \in \R^{n+1}: \x=x_0 \y; \;\;  \y \in K \right\}.
 \]
Then, the convex set $M_+(K)$ is defined as:
 \begin{eqnarray*}
 M_{+}(K) = \left\{Y \in S_+^{n+1}: \right.
 & & Y\e_0 = \diag (Y),\\
 & & Y\e_i \in \cone(K), \\ %\forall v \in V(G),\\
 & & \left. Y (\e_0 - \e_i) \in \cone(K), \; 
 i=1,\dots,n \right\},
 \end{eqnarray*}
where $\diag (Y)$ denotes the vector whose $i$-th entry is $Y_{ii}$, for every $i=0,\dots,n$.  
Projecting this lifting back to the space $\R^n$ results in %%%$N_+(K) = $
 \[
 \LS_{+}(K) = \set{ \x \in [0,1]^n : \left(\begin{array}{c} 1\\ \x \end{array}
 \right)= Y \e_0, \mbox{ for some } Y \in M_{+}(K)}.
 \]
In \cite{LovaszSchrijver1991}, Lov\'{a}sz and Schrijver proved that $\LS_+(K)$ is a relaxation of  the convex hull of integer solutions in $K$ and that 
$$\LS_+^n(K)=\conv(K\cap \{0,1\}^n),$$ 
where $\LS_+^0(K)=K$ and $\LS_+^k(K)=\LS_+(\LS_+^{k-1}(K))$ for every $k\geq 1$. 

In this work we focus on the behavior of a single application of the $\LS_+$-operator to the edge relaxation $\fra(G)$ of the stable set polytope of a graph.

Recall that we write $\LS_+(G)=\LS_+(\fra(G))$ to simplify the notation and that graphs for which $\LS_+(G)=\stab(G)$ holds are $\LS_+$-perfect.

%The $\LS_+$-Perfect Graph Conjecture states that every $\LS_+$-perfect graph is joined a-perfect. 
%Characterizing all minimally $N_+$-imperfect graphs within a certain graph class turned out to be a way to attack the conjecture for this class. 
Exhibiting one $\LS_+$-imperfect subgraph $G'$ in a graph $G$ certifies the $\LS_+$-imperfection of $G$. 
Hereby, characterizing $\LS_+$-imperfect graphs within a certain graph class turns out to be a way to attack the conjecture for this class.

Recall that $G_{LT}$ and $G_{EMN}$ are the smallest $\LS_+$-imperfect graphs. 
%Also, in \cite{BENT2011} the authors found another 
In \cite{BENT2011} the authors showed that they are the two smallest members of an infinite family of $\LS_+$-imperfect graphs having stability number two that will play a central role in some subsequent proofs:

\begin{theorem} [\cite{BENT2011}] \label{mnpmascero}
Let $G$ be a graph with $\alpha(G)=2$ such that $G-v$ is an odd antihole for some node $v$. $G$ is $\LS_+$-perfect if and only if $v$ is completely joined to $G-v$. %%%$G$ is an antiwheel. 
\end{theorem}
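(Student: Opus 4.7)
The plan is to split the proof into the two implications. For sufficiency, I would work with the fact that every odd antihole $\overline{C_{2k+1}}$ is itself $\LS_+$-perfect (since $\alpha(\overline{C_{2k+1}})=2$ and the only non-trivial facets of $\stab(\overline{C_{2k+1}})$ are the clique inequalities and the rank inequality $\sum_i x_i\leq 2$, all of which are enforced by one application of $\LS_+$). Because $v$ is universal in $G$, each stable set of $G$ is either a subset of $V(G-v)$ or equals $\{v\}$, so $\stab(G)$ is the convex hull of the incidence vector of $\{v\}$ with the natural embedding of $\stab(G-v)$. I would then verify that attaching a universal vertex preserves $\LS_+$-perfection: given $Y\in M_+(\estab(G))$ with $Y\e_0=(1,\x)$ and $x_v<1$, the principal submatrix of $Y$ indexed by $\{0\}\cup V(G-v)$, rescaled by $1/(1-x_v)$, witnesses that $\x|_{G-v}/(1-x_v)\in \LS_+(G-v)=\stab(G-v)$, yielding the desired convex decomposition.

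For necessity, suppose $v$ has at least one non-neighbor in $G-v$, and let $W$ denote the non-empty set of such non-neighbors. Since $\alpha(G)=2$, $W$ must induce a clique in $G-v=\overline{C_{2k+1}}$, equivalently a stable set of $C_{2k+1}$; in particular $|W|\leq k$. I would consider the symmetric fractional point $\x^*$ with $x^*_v=\gamma$ and $x^*_i=\tfrac{2}{2k+1}$ for every $i\in V(G-v)$, where $\gamma>0$ is chosen so that the edge constraints through $v$ are respected. The antihole rank inequality $\sum_{i\neq v} x_i\leq 2$ is tight at $\x^*$, and I would identify the facet of $\stab(G)$ that lifts this rank inequality by including $x_v$ with a positive coefficient $\beta$ depending on $|W|$ and its placement in $C_{2k+1}$; a suitable choice of $\gamma$ then makes $\x^*\notin\stab(G)$. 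Finally I would exhibit an explicit $Y\in M_+(\estab(G))$ with $Y\e_0=(1,\x^*)^\top$ by starting from the classical PSD witness of Lov\'asz--Schrijver for the antihole point $(\tfrac{2}{2k+1},\ldots,\tfrac{2}{2k+1})\in \LS_+(\overline{C_{2k+1}})$ and padding it with a row and column for $v$ compatible with the PSD structure and with the cone conditions.

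The main obstacle I anticipate is the necessity direction, where one must simultaneously produce the PSD certificate for $\x^*$ and pin down the lifted facet of $\stab(G)$ that it violates, since $\beta$ depends intricately on the size and configuration of $W$ inside $C_{2k+1}$. A cleaner alternative, if it goes through, is to exhibit, whenever $W\neq\emptyset$, one of the minimally $\LS_+$-imperfect graphs $G_{LT}$ or $G_{EMN}$ of Figure~\ref{grafos6} (for the smallest values of $k$), or an analogous member of their infinite family for larger $k$, as an induced subgraph of $G$; this would reduce necessity to the already established $\LS_+$-imperfection of those graphs and bypass the explicit construction of $Y$.
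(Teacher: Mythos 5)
This statement is imported from \cite{BENT2011}; the present paper gives no proof of it, so there is nothing internal to compare against, and your proposal has to stand on its own. It does not: both directions have genuine gaps. For sufficiency, your rescaled principal submatrix is not a valid certificate. If you scale $Y[\{0\}\cup V(G-v)]$ by $1/(1-x_v)$, the $(0,0)$-entry becomes $1/(1-x_v)\neq 1$, so the first column is no longer of the form $(1,\cdot)^T$ and the matrix does not witness membership of $\x|_{G-v}/(1-x_v)$ in $\LS_+(G-v)$. The unrescaled submatrix does lie in $M_+(\estab(G-v))$ and yields $\x|_{G-v}\in\stab(\overline C)$, i.e.\ $x(\overline C)\le 2$; but the full-support facet of the odd antiwheel is $2x_v+x(\overline C)\le 2$, which is strictly stronger, so this argument does not close the sufficiency direction. (Sufficiency does follow, but from the inclusion $\LS_+(G)\subseteq\astab^*(G)$ of \eqref{LS1}, since for a complete join of a node and an odd antihole the joined antiweb constraints together with the cliques already give $\stab(G)$.)

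The necessity direction is the actual content of the theorem, and you do not prove it. Your primary route requires exhibiting an explicit $Y\in M_+(\estab(G))$ certifying $\x^*\in\LS_+(G)$ together with a violated lifted facet; you correctly identify this as the obstacle but leave it entirely open, and this is precisely the technical core of \cite{BENT2011}. Your ``cleaner alternative'' does not work: for $k\ge 3$ the antihole $\overline{C_{2k+1}}$ is minimally imperfect, hence contains no induced $C_5=\overline{C_5}$ nor any smaller odd antihole, so the graphs $G=\overline{C_{2k+1}}+v$ with $v$ non-universal in general contain neither $G_{LT}$ nor $G_{EMN}$ as induced subgraphs. The ``analogous member of the infinite family for larger $k$'' that you would need to exhibit is the graph $G$ itself; these graphs are exactly the minimal obstructions being classified (the paper later lists them among the minimally $\LS_+$-imperfect claw-free graphs), so reducing to smaller members of the family is circular. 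As written, the proposal establishes neither implication.
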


Further $\LS_+$-imperfect graphs can be obtained by applying operations preserving $\LS_+$-imperfection.

In \cite{LiptakTuncel2003}, the \emph{stretching} of a node $v$ is introduced as follows: 
Partition its neighborhood $N(v)$ into two nonempty, disjoint sets $A_1$ and $A_2$ (so $A_1 \cup A_2 = N(v)$, and $A_1 \cap A_2 = \emptyset$). 
A stretching of $v$ is obtained by replacing $v$ by two adjacent nodes $v_1$ and $v_2$, joining $v_i$ with every node in $A_i$ for $i \in \{1, 2\}$, and %either 
subdividing the edge $v_1 v_2$ by one node $w$. In \cite{LiptakTuncel2003} it is shown:%or subdividing every edge between $v_2$ and $A_2$ with one node. 

\begin{theorem} [\cite{LiptakTuncel2003}] \label{stretching}
The stretching of a node preserves $\LS_+$-imperfection.  
\end{theorem}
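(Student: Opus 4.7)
The plan is to lift a certificate of $\LS_+$-imperfection from $G$ to its stretching $G'$. Suppose $G$ is $\LS_+$-imperfect and fix $\bar{\x} \in \LS_+(G) \setminus \stab(G)$ together with a certifying matrix $Y \in M_+(\estab(G))$ satisfying $Y\e_0 = (1,\bar{\x})^T$. With $V(G') = (V(G) \setminus \{v\}) \cup \{v_1,v_2,w\}$, I define a lifted vector $\bar{\x}' \in \R^{V(G')}$ by $\bar{x}'_u = \bar{x}_u$ for $u \neq v$, $\bar{x}'_{v_1} = \bar{x}'_{v_2} = \bar{x}_v$, and $\bar{x}'_w = 1 - \bar{x}_v$. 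Verifying that $\bar{\x}' \in \estab(G')$ is immediate: the only edges of $G'$ not inherited from $G$ are the two $v_i w$ (and each $u v_i$ with $u \in A_i$ inherits $\bar{x}_u + \bar{x}_v \le 1$ from $uv \in E(G)$), and on the new edges one gets equality $\bar{x}'_{v_i}+\bar{x}'_w = 1$.

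The crux is exhibiting a matrix $Y' \in M_+(\estab(G'))$ with $Y'\e_0 = (1,\bar{\x}')^T$. I intend to build $Y'$ through a Gram factorization. Write $Y = U^T U$ for some matrix $U$ with columns $U_j$ indexed by $\{0\} \cup V(G)$, and define a new column system $U'$ on $\{0\} \cup V(G')$ by keeping $U'_j = U_j$ for every $j \in \{0\} \cup (V(G) \setminus \{v\})$, and by setting $U'_{v_1} := U'_{v_2} := U_v$ and $U'_w := U_0 - U_v$; then put $Y' := (U')^T U'$. Positive semidefiniteness is automatic, and the identity $Y_{vv} = Y_{0v}$ gives $Y'_{v_1 v_1} = Y'_{v_2 v_2} = Y_{vv}$, $Y'_{ww} = 1 - Y_{vv}$, and $Y'_{v_i w} = 0$, as demanded by $\diag(Y') = Y'\e_0$ and by the tight edge inequalities $v_i w$. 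It remains to verify that $Y'\e_j$ and $Y'(\e_0-\e_j)$ lie in $\cone(\estab(G'))$ for each new index $j \in \{v_1,v_2,w\}$, and for old indices with respect to the new edges of $G'$. These checks reduce to the corresponding cone memberships of $Y\e_v$ and $Y(\e_0-\e_v)$ in $\cone(\estab(G))$, together with the elementary fact that $Y_{uv}=0$ for every $u \in N(v)$. I expect this mechanical case analysis over the new edges and columns to be the main obstacle, although no individual case is deep.

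Finally, to conclude that $G'$ is $\LS_+$-imperfect I must show $\bar{\x}' \notin \stab(G')$. Assume for contradiction that $\bar{\x}' = \sum_i \lambda_i \chi^{S'_i}$ with $S'_i$ stable in $G'$ and $\lambda_i \ge 0$ summing to $1$. Since the edge inequalities $x_{v_1} + x_w \le 1$ and $x_{v_2} + x_w \le 1$ are tight at $\bar{\x}'$, each $S'_i$ contains exactly one endpoint of each of the edges $v_i w$, forcing the dichotomy: either $w \in S'_i$ and $v_1,v_2 \notin S'_i$, or $\{v_1,v_2\} \subseteq S'_i$ and $w \notin S'_i$. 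In the second case $S'_i$ avoids $A_1 \cup A_2$, so $S_i := (S'_i \setminus \{v_1,v_2\}) \cup \{v\}$ is a stable set of $G$; in the first case $S_i := S'_i \setminus \{w\}$ already is. The convex combination $\sum_i \lambda_i \chi^{S_i}$ agrees with $\bar{\x}$ on $V(G) \setminus \{v\}$ and equals $\sum_{i:\{v_1,v_2\} \subseteq S'_i} \lambda_i = \bar{x}'_{v_1} = \bar{x}_v$ at $v$, so $\bar{\x} \in \stab(G)$, contradicting the choice of $\bar{\x}$. Thus the lifted certificate $(\bar{\x}', Y')$ witnesses the $\LS_+$-imperfection of $G'$.
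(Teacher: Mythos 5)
Your proposal is correct, but note that the paper itself offers no proof of this statement: Theorem~\ref{stretching} is imported verbatim from Lipt\'ak and Tun\c{c}el \cite{LiptakTuncel2003}, so there is nothing internal to compare against. Your certificate-lifting argument is a sound, self-contained proof in the spirit of the original. The two load-bearing points both hold: (i) the fact $Y_{uv}=0$ for $u\in N(v)$ follows from $Y\e_v\in\cone(\estab(G))$, since the edge $uv$ gives $Y_{uv}+Y_{vv}\le Y_{0v}=Y_{vv}$ while nonnegativity gives $Y_{uv}\ge 0$; and (ii) the ``mechanical case analysis'' genuinely closes. I checked the least obvious case: for an old index $u$, the $w$-coordinate of $Y'(\e_0-\e_u)$ is $1-Y_{vv}-Y_{uu}+Y_{uv}$, and its required nonnegativity is exactly the upper bound $Y_{vv}-Y_{uv}\le 1-Y_{uu}$ coming from the $v$-coordinate of $Y(\e_0-\e_u)\in\cone(\estab(G))$; all other coordinates and edge constraints reduce similarly to constraints already present for $Y$, and the absence of an edge $v_1v_2$ in the stretching (the edge is subdivided by $w$) is what saves you from the impossible requirement $2Y_{vv}\le Y_{vv}$. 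The rounding-down argument showing $\bar{\x}'\notin\stab(G')$ via tightness of the two new edge inequalities is also correct. One cosmetic quibble: $Y'_{v_iw}=0$ is not ``demanded by the tight edge inequalities at $\bar{\x}'$'' but by the cone condition $Y'_{v_1v_1}+Y'_{wv_1}\le Y'_{0v_1}=Y_{vv}$ together with nonnegativity; the identity $U_v^T(U_0-U_v)=Y_{0v}-Y_{vv}=0$ delivers it.
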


Hence, all stretchings of 
$G_{LT}$ and $G_{EMN}$ are $\LS_+$-imperfect, see Figure \ref{fig_stretchings} for some examples.

\begin{figure}
\begin{center}
\includegraphics[scale=1.0]{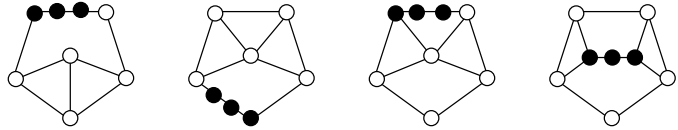}
\caption{Some node stretchings ($v_1,w,v_2$ in black) of $G_{LT}$ and $G_{EMN}$.}
\label{fig_stretchings}
\end{center}
\end{figure}
%\vspace*{-5mm}

Using stretchings of $G_{LT}$ and $G_{EMN}$ and exhibiting one more minimally $\LS_+$-imperfect graph, namely the web $W_{10}^2$, $\LS_+$-perfect webs are characterized in \cite{EN2014} as follows:

\begin{theorem} [\cite{EN2014}] \label{webs}
A web is $\LS_+$-perfect if and only if it is 
perfect or minimally imperfect.
\end{theorem}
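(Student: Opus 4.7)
The plan is to prove the two implications separately, with the nontrivial work concentrated in the forward direction.

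For the backward direction, if the web $W$ is perfect then $\LS_+(W) \subseteq \thbo(W)$ by the Lov\'asz--Schrijver inclusion, and $\thbo(W) = \stab(W)$ by the characterization (\ref{equivalencias}), so $\LS_+(W)=\stab(W)$. If instead $W$ is minimally imperfect then, by the Strong Perfect Graph Theorem, it is an odd hole $C_{2\ell+1}$ or an odd antihole $\overline{C_{2\ell+1}}$; for both families the equality $\LS_+(\estab(G))=\stab(G)$ is part of the original Lov\'asz--Schrijver analysis, since the unique non-trivial facet of $\stab(G)$ is the rank inequality of $G$ itself and this inequality is generated in a single round of the $\LS_+$ operator.

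For the forward direction I would argue by contrapositive: show that every web that is imperfect but not minimally imperfect contains an $\LS_+$-imperfect induced subgraph. Writing $W=W_n^k$, the cases $n=2k+1$ correspond precisely to the minimally imperfect webs (odd holes and odd antiholes), so we may assume $n>2k+1$ with $k\ge 2$. The ``extra'' vertices forced by $n>2k+1$ supply the configurations needed to invoke the toolbox of Section~\ref{Sec:N+perfect}: in case (i), when $\alpha(W_n^k)=2$, the web contains an odd antihole together with a non-completely-joined vertex, so it is $\LS_+$-imperfect by Theorem~\ref{mnpmascero}; in case (ii), when $\alpha(W_n^k)\ge 3$, the rigid circular adjacency rule $k\le|i-j|\le n-k$ of the complementary antiweb lets us select a cyclic sequence of vertices inducing a stretching of $G_{LT}$ or $G_{EMN}$, yielding $\LS_+$-imperfection by Theorem~\ref{stretching}; the finitely many borderline parameters that escape this generic construction are handled by (iii), an induced copy of the additional minimally $\LS_+$-imperfect web $W_{10}^2$.

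The main obstacle is the uniform combinatorial verification that cases (i)--(iii) cover every non-extremal imperfect $W_n^k$. In particular, the stretching construction of case~(ii) is delicate: the cyclic sequence of vertices must be chosen so that the induced subgraph matches precisely a stretching of $G_{LT}$ or $G_{EMN}$, and the modular arithmetic of the web's adjacencies leaves little room for error. I would therefore first settle case~(i) (reducing directly to Theorem~\ref{mnpmascero}), then build the generic stretching in (ii) for all but a finite list of small $(n,k)$, and finally absorb those exceptions via case~(iii).
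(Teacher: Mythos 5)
Your overall strategy matches the one the paper reports from \cite{EN2014}: the backward direction via the known inclusions $\LS_+(G)\subseteq\thbo(G)$ and $\LS_+(G)\subseteq\astab^*(G)$, and the forward direction by exhibiting, in every imperfect and not minimally imperfect web, one of the $\LS_+$-imperfect certificates assembled in Section~\ref{Sec:N+perfect} (the $\alpha=2$ family of Theorem~\ref{mnpmascero}, node stretchings of $G_{LT}$ and $G_{EMN}$, and the web $W_{10}^2$). The difference lies in how the hard case is organized. The paper's route performs the delicate stretching construction only for the one-parameter family $W_n^2$ (all of which, apart from the minimally imperfect $W_7^2$ and the exceptional $W_{10}^2$, contain a node stretching of $G_{LT}$), and then disposes of every web with larger clique parameter by locating an $\LS_+$-imperfect induced subweb $W_{n'}^2$; this confines the modular-arithmetic work to a single parameter. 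You instead propose to build a stretching of $G_{LT}$ or $G_{EMN}$ directly inside a general $W_n^k$ with $\alpha\geq 3$, which is precisely the step you yourself flag as the main obstacle and which is considerably harder to verify uniformly in two parameters than in one.

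Two further points deserve attention. First, $W_{10}^2$ is not merely a device for absorbing borderline parameters: it is itself an imperfect, not minimally imperfect web that must be shown $\LS_+$-imperfect, and since it is \emph{minimally} $\LS_+$-imperfect it contains no $\LS_+$-imperfect proper induced subgraph, so no subgraph-hunting of the kind described in your cases (i)--(iii) can certify its imperfection; that fact has to be established directly (as \cite{EN2014} does) and imported as a separate ingredient, not derived inside your case analysis. Second, your case (i) still requires exhibiting, inside an imperfect web with $\alpha=2$ that is not an odd antihole, an induced subgraph $H$ with $\alpha(H)=2$ such that $H-v$ is an odd antihole not completely joined to $v$; the paper's sketch pins this down concretely by showing such webs contain $G_{EMN}$. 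Until the generic stretching construction of your case (ii) is actually carried out and the exceptional parameters are identified, the forward direction remains a plan rather than a proof.
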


The proof shows that all imperfect not minimally imperfect webs with stability number 2  contain $G_{EMN}$ and all webs $W_{n}^2$ different from $W_{7}^2, W_{10}^2$, some stretching of $G_{LT}$. Furthermore, all other webs contain some $\LS_+$-imperfect $W_{n'}^2$ and are, thus, also $\LS_+$-imperfect. 

Another way to attack the conjecture is from the polyhedral point of view. 

A graph $G$ is said to be \emph{facet-defining} if $\stab(G)$ has a full-support facet. 
Observe that verifying Conjecture \ref{conjecture} is equivalent to prove that the only facet defining $\LS_+$-perfect graphs are the complete joins of antiwebs. That is why we need to rely on structural results and complete facet-descriptions of  stable set polytope of the graphs. 

Using this approach, in \cite{ENW2014}, the authors characterized $\LS_+$-perfect line graphs by showing: 

\begin{theorem} [\cite{ENW2014}] \label{hipomatch}
A facet-defining line graph $G$ is $\LS_+$-perfect if and only if $G$ is a clique or an odd hole. 
\end{theorem}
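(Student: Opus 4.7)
The ``if'' direction is immediate: a clique is perfect and hence $\LS_+$-perfect, while an odd hole $C_{2k+1}$ is minimally imperfect with $\thbo(C_{2k+1}) = \stab(C_{2k+1})$, so the general inclusion $\LS_+(G) \subseteq \thbo(G)$ forces $\LS_+(C_{2k+1}) = \stab(C_{2k+1})$.

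For the ``only if'' direction, I would write $G = L(H)$ for a root graph $H$ and appeal to Edmonds' description of the matching polytope of $H$. Pulling back the full-support facet of $\stab(L(H))$ through the edge/node correspondence, the only possible sources are: (i) a vertex-degree inequality of $H$ whose support covers all of $E(H)$, which forces $H$ to be a star and $L(H)$ to be a clique; (ii) the triangle case $H=K_3$, again giving a clique line graph; or (iii) a blossom inequality on some $U \subseteq V(H)$ whose support spans all of $E(H)$, forcing $V(H)=U$ with $H$ being $2$-edge-connected and factor-critical (hypomatchable). In case (iii) the simplest possibility is $H = C_{2k+1}$, yielding $L(H) = C_{2k+1}$, an odd hole.

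It remains to handle case (iii) when $H$ is hypomatchable, $2$-edge-connected, and not an odd cycle. For this I would use Lov\'asz's odd ear decomposition: such an $H$ admits a sequence $H_0 \subset H_1 \subset \cdots \subset H_r = H$ where $H_0$ is an odd cycle and each $H_i$ is obtained from $H_{i-1}$ by attaching an odd path (an ear) whose endpoints lie in $H_{i-1}$, with $r \geq 1$. The aim is to show that already $L(H_1)$, and hence $L(H)$, contains as an induced subgraph one of the minimally $\LS_+$-imperfect graphs $G_{LT}$, $G_{EMN}$, or one of their node-stretchings as in Figure~\ref{fig_stretchings}. Once this is done, Theorems~\ref{mnpmascero} and~\ref{stretching}, combined with the fact that $\LS_+$-imperfection is inherited by induced supergraphs, close the argument.

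The main obstacle is the case analysis for the first ear. An ear sharing just one vertex with $H_0$ corresponds in the line graph to attaching a long path to the clique of edges at that vertex, which I expect to realize a stretching-like configuration on an odd hole; an ear sharing two distinct vertices of $H_0$ produces a theta-type configuration with three internally disjoint paths, whose line graph must be inspected directly to exhibit an $\LS_+$-imperfect induced subgraph. The delicate part lies in handling ears of short length, where $L(H_1)$ is still small, and in verifying that none of the few sporadic small hypomatchable non-cycle root graphs accidentally yields an $\LS_+$-perfect line graph; a handful of small base cases must be checked by hand to complete the proof.
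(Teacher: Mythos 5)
Your ``only if'' strategy is the same as the one the paper attributes to \cite{ENW2014}: invoke Edmonds--Pulleyblank to reduce facet-defining line graphs to $L(H)$ with $H$ a $2$-connected hypomatchable graph, take the odd ear decomposition $H_0, H_1, \dots, H_k = H$ with $H_0$ an odd hole, and show that as soon as there is at least one ear, $L(H_1)$ already contains a node stretching of $G_{LT}$ or $G_{EMN}$, whence $\LS_+$-imperfection follows from Theorem~\ref{stretching} and heredity. However, you explicitly stop at exactly the point where the actual content of the proof lies: the case analysis showing that $L(H_1)$ \emph{is} (or contains) such a stretching for every possible first ear, including the short ears and the small sporadic root graphs you mention. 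That analysis is the theorem; without it you have only restated the reduction. So the ``only if'' direction, while correctly aimed, is not a proof.

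There is also a genuine error in your ``if'' direction. You claim $\thbo(C_{2k+1}) = \stab(C_{2k+1})$ and deduce $\LS_+$-perfection of odd holes from $\LS_+(G) \subseteq \thbo(G)$. But by the characterization~(\ref{equivalencias}) quoted in the paper, $\thbo(G) = \stab(G)$ holds \emph{only} for perfect graphs, and odd holes are imperfect (indeed $\vartheta(C_5) = \sqrt{5} > 2 = \alpha(C_5)$, so $\thbo(C_5) \supsetneq \stab(C_5)$). The inclusion you invoke therefore does not force equality. The correct route is via the stronger inclusion~(\ref{LS1}), $\LS_+(G) \subseteq \astab^*(G)$: since an odd hole is an antiweb and antiwebs are $a$-perfect, $\astab^*(C_{2k+1}) = \stab(C_{2k+1})$, which together with $\stab(G) \subseteq \LS_+(G)$ gives the claim. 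Equivalently, one can cite directly that $\LS_+$ applied to the edge relaxation produces all odd hole inequalities.
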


The proof relies on a result  due to Edmonds \& Pulleyblank~\cite{EdmondsPulleyblank1974} who showed that a line graph $L(H)$ is facet-defining if and only if $H$ is a 2-connected hypomatchable graph (that is, for all nodes $v$ of $H$, %the subgraph 
$H-v$ admits a perfect matching). 
Such graphs $H$ have an ear decomposition $H_0, H_1, \dots, H_k = H$ where $H_0$ is an odd hole and $H_i$ is obtained from $H_{i-1}$ by adding an odd path (ear) between distinct nodes of $H_{i-1}$. 
In \cite{ENW2014}, it is shown that the line graph $L(H_1)$
 is a node stretching of $G_{LT}$ or $G_{EMN}$ and, thus, $\LS_+$-imperfect by \cite{LiptakTuncel2003}. 

Moreover, it is proved that the only minimally $\LS_+$-imperfect line graphs are stretchings of $G_{LT}$ and $G_{EMN}$.

\subsection{About claw-free graphs}
\label{Sec:ClawFree}

In several respects, claw-free graphs are generalizations of line graphs. 
An intermediate class between line graphs and claw-free graphs form \textit{quasi-line graphs}, where the neighborhood of any node can be partitioned into two cliques (i.e., quasi-line graphs are the complements of near-bipartite graphs). 

Quasi-line graphs can be divided into two subclasses: fuzzy circular interval graphs and semi-line graphs.

Let ${\cal C}$ be a circle, ${\cal I}$ a collection of intervals 
in ${\cal C}$ without proper containments and common endpoints, 
and $V$ a multiset of points in ${\cal C}$. 
%The \textit{fuzzy circular interval graph} $G(V,{\cal I})$ 
 A \textit{fuzzy circular interval graph} $G(V,{\cal I})$ 
has node set $V$ and two nodes are adjacent 
if both belong to one interval $I \in {\cal I}$, where 
edges between different endpoints of the same interval may be omitted. 

{\em Semi-line graphs} are either line graphs or quasi-line graphs without a representation as a fuzzy circular interval graph.

It turned out that so-called \emph{clique family inequalities} suffice to describe the stable set polytope of quasi-line graphs. 
%Eisenbrand et al.~\cite{EisenbrandEtAl2005} proved that so-called {\em clique family inequalities} of the form
%\begin{equation}
%  a \sum_{i \in V'}  x_i + (a-1) \! \! \sum_{i \in V''} \! \! x_i \leq b
%\end{equation}
%suffice to describe the stable set polytope. As a consequence, every non-trivial facet of a quasi-line graph is $a, (a+1)$-valued, for some $a \geq 0$.
Given a graph $G$, a family $\mathcal F$ of cliques and an integer $p<n=|\mathcal F|$, the clique family inequality ($\mathcal F$, $p$) is the following valid inequality for $\stab(G)$
\begin{equation}
(p-r)\sum_{i\in W} x_i+ (p-r-1) \sum_{i\in W_o} x_i \leq (p-r) \left\lfloor \frac{n}{p}\right\rfloor
\end{equation}
where $r=n \, mod\, p$ and $W$ (resp. $W_o$) is the set of nodes contained in at least $p$ (resp. exactly $p-1$) cliques of $\mathcal F$. 

This 
generalizes the results of Edmods~\cite{Edmonds1965} and Edmonds \& Pulleyblank~\cite{EdmondsPulleyblank1974} that $\stab(L(H))$ is described by clique constraints 
and rank constraints  
\begin{equation}
\label{Eq_hypomatch-constraints}
x(L(H')) \le \frac{1}{2}(|V(H')|-1)
\end{equation}
associated with the line graphs of 2-connected hypomatchable induced subgraphs $H' \subseteq H$. 
Note that the rank constraints of type (\ref{Eq_hypomatch-constraints}) are special clique family inequalities. 

Chudnovsky and Seymour \cite{ChudnovskySeymour2004} extended this result to %a superclass of line graphs. 
semi-line graphs, for which $\stab(G)$ is given by clique constraints and rank constraints of type (\ref{Eq_hypomatch-constraints}). Then, %Due to \cite{ChudnovskySeymour2004}, 
semi-line graphs are rank-perfect with line graphs as only facet-defining subgraphs. 

Moreover, in \cite{GalluccioSassano97} Galluccio and Sassano prove that if a rank constraint is facet-defining for a claw-free graph $G$ then, either, $G$ is a clique or
$G$ contains the line graph of a minimal 2-connected hypomatchable graph $H$ or $G$ contains $W_{\alpha k +1}^{k-1}$ with $k\geq 3$ and $\alpha =\alpha(G)$.

Eisenbrand et al.~\cite{EisenbrandEtAl2005} proved that clique family inequalities 
suffice to describe the stable set polytope of fuzzy circular interval graphs. 
Stauffer \cite{Stauffer} verified a conjecture of \cite{PecherWagler2006} that every facet-defining clique family inequality of a fuzzy circular interval graph $G$ is \emph{associated with a web in} $G$.

All these results together complete the picture for quasi-line graphs. 

However, there are claw-free graphs which are not quasi-line. In particular, every graph with stability number 2 is claw-free and the 5-wheel is the smallest claw-free not quasi-line graph.

Due to Cook (see~\cite{Shepherd1995}), all facets for graphs $G$ with $\alpha(G)=2$ 
are $1,2$-valued {\em clique-neighborhood constraints}.
This is not the case for graphs $G$ with $\alpha(G) = 3$. 
In fact, all the known difficult facets of claw-free graphs occur in this class.   
Some non-rank facets with up to five different non-zero coefficients are presented in~\cite{GilesTrotter1981,LieblingEtAl2004}. 
All of these facets turned out to be so-called {\em co-spanning 1-forest constraints} due to \cite{PecherWagler2010}, where it is also shown that it is possible to build a claw-free graph with stability number three inducing a co-spanning 1-forest facet with $b$ different left hand side coefficients, for every positive integer $b$. 

The problem of characterizing $\stab(G)$ when $G$ is a connected claw-free but not quasi-line graph with $\alpha(G) \geq 4$ was studied by Galluccio et al.: 
In a series of results \cite{GGV2008,GGV2014a,GGV2014b}, it is shown that if such a graph $G$ does not contain a clique cutset, then 1,2-valued constraints suffice to describe STAB($G$). Here, besides 5-wheels, different rank and non-rank facet-defining inequalities of the geared graph $G$ shown in Fig.~\ref{Fig:5wheelStrips} play a central role. 

In addition, graphs of this type can be decomposed into strips. 
A \emph{strip} $(G,a,b)$ is a (not necessarily connected) graph with two designated simplicial nodes $a$ and $b$ (a node is \emph{simplicial} if its neighborhood is a clique). 
A claw-free strip containing a 5-wheel as induced subgraph is a {\em 5-wheel strip}. 
Given two node-disjoint strips $(G_1,a_1,b_1)$ and $(G_2,a_2,b_2)$, their \emph{composition} is the union of $G_1\setminus\{a_1,b_1\}$ and $G_2\setminus\{a_2,b_2\}$ together with all edges between $N_{G_1}(a_1)$ and $N_{G_2}(a_2)$, and between $N_{G_1}(b_1)$ and $N_{G_2}(b_2)$ \cite{ChudnovskySeymour2004}. 
 
As shown in~\cite{OPS2008}, this composition operation can be generalized to more than two strips: 
%\begin{theorem}\label{deco} {\em~\cite{OPS08}}
Every claw-free but not quasi-line graph $G$ with $\alpha(G) \geq 4$ admits a decomposition into strips, where at most one strip is quasi-line and all the remaining ones are  5-wheel strips having stability number at most 3. 
%\end{theorem} 
There are only three ``basic'' types of 5-wheel strips (see Fig.~\ref{Fig:5wheelStrips}) which can be extended by adding nodes belonging to the neighborhood of the 5-wheels (see~\cite{OPS2008} for details). 

Note that a claw-free but not quasi-line graph $G$ with $\alpha(G) \geq 4$ containing a clique cutset may have a facet-defining subgraph $G'$ with $\alpha(G') = 3$ (inside a 5-wheel strip of type 3), %%%so that arbitrary many different left hand side coefficients may be again required, 
see~\cite{PietropaoliWagler2008} for examples. 

Taking all these results together into account gives the complete list of facets needed to describe the stable set polytope of claw-free graphs.

\begin{figure}
\begin{center}
\includegraphics[scale=0.8]{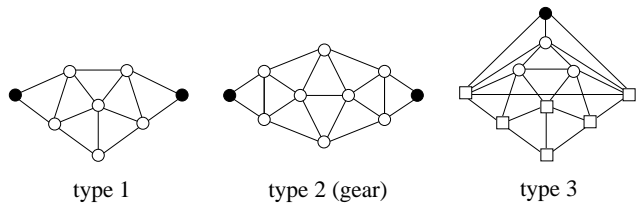}
\caption{The three types of basic 5-wheel strips.}
\label{Fig:5wheelStrips}
\end{center}
\end{figure}
%\vspace*{-9mm}

\section{$\LS_+$-Perfect Graph Conjecture for claw-free graphs}
\label{results}

In this section, we verify the $\LS_+$-Perfect Graph Conjecture for all relevant subclasses of claw-free graphs.

\subsection{Graphs with $\alpha(G) = 2$}

%\paragraph{Graphs with $\alpha(G) = 2$} 
The graphs with $\alpha(G) = 2$ play a crucial role 
in this context.
Relying on the behavior of the stable set polytope under taking complete joins \cite{Chvatal1975} and the result on $\LS_+$-(im)perfect graphs $G$ with $\alpha(G) = 2$ (Theorem \ref{mnpmascero}), we can prove:
 
\begin{theorem}
\label{thm_N+imperfect_alpha2}
All facet-defining $\LS_+$-perfect graphs $G$ with $\alpha(G) = 2$ are odd antiholes or complete joins of odd antihole(s) and a (possibly empty) clique.
\end{theorem}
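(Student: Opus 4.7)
The plan is to proceed by induction on $|V(G)|$, combining Theorem~\ref{mnpmascero} with the classical behavior of $\stab(\cdot)$ under complete joins from~\cite{Chvatal1975}.

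First I would observe that $G$ cannot be perfect. For a perfect graph all facets of $\stab(G)$ are clique constraints, so a full-support facet would force $V(G)$ itself to be a clique, contradicting $\alpha(G)=2$. Hence $G$ is imperfect, and since every odd hole $C_{2k+1}$ with $k \ge 2$ has stability number at least three, $G$ must contain an induced odd antihole $H = \overline{C}_{2k+1}$. For any $v \in V(G) \setminus V(H)$, the subgraph $G[V(H) \cup \{v\}]$ has stability number exactly $2$ (bounded above by $\alpha(G)=2$ and below by $\alpha(H)=2$) and is $\LS_+$-perfect as an induced subgraph of $G$. Theorem~\ref{mnpmascero} then forces $v$ to be completely joined to $H$, so every node outside $H$ is universal to $H$; writing $G'=G[V(G)\setminus V(H)]$, this means $G$ is the complete join $H \ast G'$.

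To finish, I would invoke Chv\'atal's description of $\stab(G_1 \ast G_2)$~\cite{Chvatal1975}: every non-trivial facet of the join has the form $\frac{1}{b_1}a_1^{\top}y + \frac{1}{b_2}a_2^{\top}z \le 1$, where $a_1^{\top}x \le b_1$ and $a_2^{\top}x \le b_2$ are facets of $\stab(G_1)$ and $\stab(G_2)$ respectively. Consequently a full-support facet of $\stab(G)$ projects onto full-support facets of both $\stab(H)$ and $\stab(G')$, so $G'$ must itself be facet-defining. Since $G'$ is also $\LS_+$-perfect with $\alpha(G') \le 2$, two cases remain: either $\alpha(G')=1$ and $G'$ is a clique $K_s$ (with $s \ge 0$), yielding $G = H \ast K_s$ of the required form; or $\alpha(G')=2$, in which case the induction hypothesis gives that $G'$ is an odd antihole or a complete join of odd antihole(s) and a (possibly empty) clique, so the same form is inherited by $G = H \ast G'$.

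The main obstacle is this last step: one must invoke Chv\'atal's complete-join theorem precisely enough to guarantee that a full-support facet of $\stab(H \ast G')$ descends to full-support facets of both factors. This is what lifts Theorem~\ref{mnpmascero}, a one-node extension result, into a full inductive classification of facet-defining $\LS_+$-perfect graphs with $\alpha(G)=2$.
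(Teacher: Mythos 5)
Your proof is correct and follows essentially the same route as the paper's: imperfection forces an induced odd antihole $\overline C$, Theorem~\ref{mnpmascero} forces every node outside $\overline C$ to be universal to it, and Chv\'atal's complete-join theorem shows the remaining factor is again facet-defining and $\LS_+$-perfect with stability number at most $2$, so the argument closes by induction (the paper phrases this as ``apply the same argument''). One cosmetic slip: $C_5$ has stability number $2$, not at least $3$, but since $C_5$ is self-complementary the conclusion that $G$ contains an induced odd antihole is unaffected.
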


\begin{proof}%[Theorem \ref{thm_N+imperfect_alpha2}]
Let $G$ be a facet-defining $\LS_+$-perfect graph with stability number 2. 
We first observe that $G$ is imperfect (because it has a full-support facet, but it is different from a clique). 
Thus, $G$ contains an odd antihole $\overline C$ by \cite{ChudnovskyEtAl2006}.

If $G = \overline C$, we are done. 
If $G \neq \overline C$, every node $u$ outside $\overline C$ is completely joined to $\overline C$ due to Theorem \ref{mnpmascero} (otherwise, $u$ and $\overline C$ induce an $\LS_+$-imperfect subgraph of $G$, a contradiction to $G$ $\LS_+$-perfect). 

Therefore, $G$ is the complete join of $\overline C$ and $G-\overline C$. 
Note that $G-\overline C$ is again $\LS_+$-perfect, facet-defining by Chv\'atal \cite{Chvatal1975}, and  $\alpha(G-\overline C) \leq 2$. 
If $\alpha(G-\overline C) = 2$, we apply the same argument as for $G$; if $\alpha(G-\overline C) = 1$, it is a clique. $\Box$
\end{proof}

This shows that all facet-defining $\LS_+$-perfect graphs $G$ with $\alpha(G) = 2$ are joined $a$-perfect, and we conclude:

\begin{corollary}\label{Cor_alpha2_2}
The $\LS_+$-Perfect Graph Conjecture is true for graphs with stability number 2.
\end{corollary}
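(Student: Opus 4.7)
The plan is to prove both directions of the $\LS_+$-Perfect Graph Conjecture for graphs with $\alpha(G)=2$, with the heavy lifting already done by Theorem \ref{thm_N+imperfect_alpha2}. The ``if'' direction is purely polyhedral and works for every graph: combining the defining relaxation property $\stab(G) \subseteq \LS_+(G)$ with the inclusion (\ref{LS1}) gives $\stab(G) \subseteq \LS_+(G) \subseteq \astab^*(G)$, so the hypothesis $\stab(G)=\astab^*(G)$ collapses the chain and forces $\LS_+(G)=\stab(G)$, i.e.\ $G$ is $\LS_+$-perfect. This step uses no property of $\alpha(G)$.

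For the converse, I would assume $G$ is $\LS_+$-perfect with $\alpha(G) = 2$ and argue that every facet of $\stab(G)$ (beyond non-negativity and edge constraints, which are trivially joined antiweb constraints) is a joined antiweb constraint. Every non-trivial facet of $\stab(G)$ arises as a full-support facet of some facet-defining induced subgraph $G' \subseteq G$; since $\LS_+$-perfection is hereditary, $G'$ is again $\LS_+$-perfect, with $\alpha(G') \leq 2$. If $\alpha(G')=1$ then $G'$ is a clique and $x(G') \le 1$ is a (degenerate) joined antiweb constraint with clique part $Q=G'$. If $\alpha(G')=2$, Theorem \ref{thm_N+imperfect_alpha2} forces $G'$ to be either an odd antihole $\overline{C_{2k+1}} = A^2_{2k+1}$, whose full-support facet $x(G') \le 2$ rescales to $\tfrac{1}{2}x(G') \le 1$ (a joined antiweb constraint with a single antiweb and no clique), or a complete join of such antiholes with a (possibly empty) clique $Q$, whose full-support facet has the form
\[
\tfrac{1}{2} \sum_{j} x(\overline{C_j}) + x(Q) \le 1,
\]
which is exactly the joined antiweb constraint associated with the antiwebs $\overline{C_j} = A^2_{|\overline C_j|}$ and $Q$. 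Hence all facets of $\stab(G)$ are joined antiweb constraints, yielding $\astab^*(G)=\stab(G)$.

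The main technical point to check carefully is the coefficient pattern in the complete-join case, and here I would invoke Chv\'atal's result \cite{Chvatal1975} on the facet structure of stable set polytopes under complete joins: for $G' = H_1 \ast \cdots \ast H_m$ with $\alpha(G') = \max_i \alpha(H_i) = 2$, each full-support facet of $\stab(G')$ is obtained by taking a full-support facet $a_i^T x \le \alpha(H_i)$ of each component and rescaling so that every maximum stable set of $G'$ (which lies entirely within a single component) attains equality. For the specific components we have here (odd antiholes with $\alpha=2$ and a clique with $\alpha=1$) this forces coefficients $\tfrac{1}{2}$ on the antihole nodes and $1$ on the clique nodes, reproducing exactly the joined antiweb form above. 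With this accounted for, the two implications combine to establish Conjecture \ref{conjecture} for graphs with stability number two.
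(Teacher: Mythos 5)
Your proposal is correct and follows essentially the same route as the paper: the converse direction is the general inclusion (\ref{LS1}), and the forward direction reduces to classifying the facet-defining $\LS_+$-perfect induced subgraphs via Theorem \ref{thm_N+imperfect_alpha2} and observing that odd antiholes and their complete joins with cliques yield exactly joined antiweb constraints. You merely spell out the reduction to facet-defining subgraphs and the Chv\'atal join coefficients, which the paper leaves implicit in the sentence preceding the corollary.
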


\subsection{Quasi-line graphs}

Recall that quasi-line graphs %partition 
divide into the two subclasses of semi-line graphs and fuzzy circular interval graphs. 

Chudnovsky and Seymour \cite{ChudnovskySeymour2004} proved that the stable set polytope of a semi-line graph is given by rank constraints associated with cliques and the line graphs of 2-connected hypomatchable graphs. Together with the result from \cite{ENW2014} (presented in Theorem \ref{hipomatch}), we directly conclude that 
the $\LS_+$-Perfect Graph Conjecture holds for semi-line graphs. 

Based on the results of Eisenbrand et al.~\cite{EisenbrandEtAl2005} and Stauffer \cite{Stauffer}, combined with the characterization of $\LS_+$-imperfect webs from \cite{EN2014} (Theorem \ref{webs}), we are able to show:

\begin{theorem}
\label{thm_N+imperfect_fcig}
All facet-defining $\LS_+$-perfect fuzzy circular interval graphs are cliques, odd holes or odd antiholes.
\end{theorem}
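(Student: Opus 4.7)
The plan is to reduce the statement to the web classification in Theorem~\ref{webs}: I would show that a facet-defining $\LS_+$-perfect fuzzy circular interval graph $G$ is either a clique or itself a web, and then narrow down the web case via Theorem~\ref{webs}.

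To realize this reduction, I would fix a full-support facet-inducing inequality for $\stab(G)$, which exists by the facet-defining assumption. By the result of Eisenbrand et al.~\cite{EisenbrandEtAl2005}, this inequality is either a clique constraint or a clique family inequality $(\mathcal{F},p)$. In the first case, full support immediately forces $G$ to be a clique, which is one of the three allowed outcomes. In the second case, Stauffer's theorem~\cite{Stauffer} provides an induced web $W \subseteq G$ with which the inequality is associated, so that the support of the inequality coincides with $V(W)$. Full support of the facet then yields $V(W) = V(G)$, and since $W$ is induced this forces $G = W$, i.e.\ $G$ is itself a web.

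Having reduced $G$ to a web and using that $G$ is $\LS_+$-perfect, Theorem~\ref{webs} gives that $G$ is perfect or minimally imperfect. Among perfect webs only cliques are facet-defining, since the facets of the stable set polytope of a perfect graph are clique constraints, and such a constraint has full support only when the clique spans $V(G)$; among minimally imperfect webs only odd holes $C_{2k+1}$ and odd antiholes $\overline{C_{2k+1}}$ occur. Together with the clique case above, these yield exactly the three families in the statement.

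The hardest step is pinning down the precise reading of Stauffer's ``associated with a web in $G$'': one must know that the support of the clique family inequality equals the vertex set of a web induced in $G$, in order to translate full support of the selected facet into $G = W$. Once this identification is secured, the remainder of the proof is a direct appeal to Theorem~\ref{webs} together with the short observation that perfect facet-defining graphs are cliques.
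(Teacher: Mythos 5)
Your reduction hinges on the claim that Stauffer's theorem identifies the \emph{support} of the facet-defining clique family inequality with the vertex set of an induced web, so that full support forces $G=W$. That reading is false, and you correctly singled it out as the step that needs to be secured --- but it cannot be secured. What Stauffer proves (and what the paper uses) is that the inequality $(\mathcal F,p)$ is \emph{associated with} a web $W_n^{p-1}$ in the sense that $\mathcal F=\{K(I_l(v)): v\in V(W_n^{p-1})\}$, which only gives the containment $V(W_n^{p-1})\subseteq W$; the support of the inequality is $W\cup W_o$ and may strictly contain the web. Non-rank facets of webs themselves already exhibit this: their support is the whole web while the associated sub-web in Stauffer's sense is a proper induced $W_{n'}^{p-1}$. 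So from full support you can only conclude $V(G)=W\cup W_o\supseteq V(W_n^{p-1})$, not $G=W_n^{p-1}$.

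This is precisely where the real work of the theorem lies, and your proposal omits it. The paper first applies Theorem~\ref{webs} to the \emph{induced} web $W_n^{p-1}$ (an induced subgraph of the $\LS_+$-perfect $G$, hence itself $\LS_+$-perfect) to force $n=2k+1$ and $p\in\{2,k\}$, and then must rule out any node $v$ of the support lying outside the web. For $p=2$ it shows such a $v$ is adjacent to $s+1$ consecutive nodes of the odd hole with $2\le s\le 3$ by claw-freeness, producing an odd subdivision of $G_{LT}$ or $G_{EMN}$; for $p=k\ge 3$ it shows $V(W_{2k+1}^{k-1})\cup\{v\}$ induces a graph with stability number $2$ containing the odd antihole not completely joined to $v$, which is $\LS_+$-imperfect by Theorem~\ref{mnpmascero}. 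Only after these contradictions does one get $G=W_{2k+1}^1$ or $G=W_{2k+1}^{k-1}$. Without an argument of this kind your proof does not go through; the remainder of your outline (cliques among perfect webs, odd holes and antiholes among minimally imperfect ones) is fine but rests on the unproven identification $G=W$.
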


\begin{proof}
Let $G$ be a fuzzy circular interval graph such that it is a facet-defining $\LS_+$-perfect graph. 
If $G$ is a clique, the result is immediate. 
Otherwise, $G$ is the support graph of a clique family inequality with parameters $(\mathcal F, p)$ 
\[
(p-r)\sum_{i\in W} x_i+ (p-r-1) \sum_{i\in W_o} x_i \leq (p-r) \left\lfloor \frac{n}{p}\right\rfloor \!,
\]
associated with a web $W_n^{p-1}$ with $V(W_n^{p-1})\subset W$ (\cite{EisenbrandEtAl2005,Stauffer}).  

More precisely, if for any node $v\in V$, ${\cal{I}}_v=\{I\in {\cal{I}}: v\in I\}$, there exist $I_l(v)$ and $I_r(v)$ in ${\cal{I}}_v$ such that $I_l(v)\cup I_r(v)=\bigcup_{I \in {\cal{I}}_v} I$.
The clique family inequality associated with $W^{p-1}_n$ is the clique family inequality having parameters ${\cal{F}}=\{K(I_l(v)): v\in V(W_n^{p-1})\}$ and $p$ where $K(I_l(v))=\{u\in I_l(v): u \text{ is adjacent to } v\}$.

By Theorem \ref{webs}, $W_n^{p-1}$ is $\LS_+$-perfect if and only if it is an odd hole or an odd antihole.  

That is, since  $G$ is $\LS_+$-perfect then $n=2k+1$ and $p=2$ or $p=k\geq 3$. 
In both cases, $r=1$ follows.

Consider first the case in which $p=2$. Then the clique family inequality $(\mathcal F, p)$ takes the form
\[
\sum_{i\in W} x_i\leq \left\lfloor \frac{n}{p}\right\rfloor. 
\]

%Hence it is a rank inequality, and it has already been proved in \cite{GalluccioSassano97} that $G$ must be a line graph. But, since $G$ is $\LS_+$-perfect, it follows from Theorem \ref{hipomatch} that $G$ is an odd hole (the case $p=2$) or it is an odd antihole (when $p=k\geq 3$) and the result follows.

Suppose there exists $v\in W\setminus V(W_{2k+1}^1)$. Then, $v$ belongs to $s\geq 2$ consecutive cliques in $\cal{F}$ implying that $v$ is connected to exactly $s+1$ consecutive nodes in $W_{2k+1}^1$. Observe that $s\leq 3$ since $G$ is a claw-free graph. Then, if $s=2$ (resp. $s=3$) $G$ contains an odd subdivision of $G_{LT}$ (resp. $G_{EMN}$). Since $G$ is $LS_+$-perfect then $W=V(W_{2k+1}^1)$ or, equivalently,  $G=W_{2k+1}^1$.

Now suppose that $p=k\geq 3$. Let us call $\{1,2,\ldots,2k+1\}$ the nodes in $V(W_{2k+1}^{k-1})$.

Suppose there exists $v\in (W_o\cup W)\setminus V(W_{2k+1}^{k-1})$. Then $v$ belongs to at least $s\geq k-1$ consecutive cliques of the family $\mathcal F$. W.l.o.g we may assume that the $k-1$ of the $s$ consecutive cliques are the ones that contain the sets of nodes $\{1,\ldots,k\}$, $\{2,\ldots,k+1\}$, ... $\{k-1,\ldots,2k-2\}$. Then $v$ is connected to at least $2k-2$ consecutive nodes in $W_{2k+1}^{k-1}$. Moreover, since $G$ is quasi-line, $v$ is connected with at most $2k$ nodes. It follows that the subgraph of $G$ induced by $V(W_{2k+1}^{k-1})\cup \{v\}$ has stability number two, and from Theorem \ref{thm_N+imperfect_alpha2} it is $\LS_+$-imperfect. But from our assumption that $G$ is $\LS_+$-perfect, we conclude $W_o\cup W=V(W_{2k+1}^{k-1})$ or, equivalently, $G=W_{2k+1}^{k-1}$.
$\Box$
\end{proof}

As a consequence, every $\LS_+$-perfect fuzzy circular interval graph is a-perfect.
This verifies the $\LS_+$-Perfect Graph Conjecture for fuzzy circular interval graphs.
%implies:
%
%\begin{corollary}\label{Cor_fuzzyci}
%The $N_+$-Perfect Graph Conjecture is true for fuzzy circular interval graphs.
%\end{corollary}

Since the class of quasi-line graphs %partitions 
divides into semi-line graphs and fuzzy circular interval graphs, we obtain as direct consequence: %of Corollary \ref{Cor_semi-line} and Corollary \ref{Cor_fuzzyci}:

\begin{corollary}\label{Cor_quasi-line}
The $\LS_+$-Perfect Graph Conjecture is true for quasi-line graphs.
\end{corollary}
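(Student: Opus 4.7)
The plan is to exploit the fact that every quasi-line graph falls into one of the two subclasses already treated, namely semi-line graphs or fuzzy circular interval graphs, and to assemble the two halves uniformly. For the reverse direction of the equivalence in Conjecture \ref{conjecture}, I would argue once and for all (not just for quasi-line graphs) that if $\stab(G)=\astab^*(G)$, then combining with the inclusion $\LS_+(G)\subseteq\astab^*(G)$ from \eqref{LS1} and the trivial inclusion $\stab(G)\subseteq\LS_+(G)$ gives $\stab(G)=\LS_+(G)$, so $G$ is $\LS_+$-perfect.

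For the forward direction, let $G$ be an $\LS_+$-perfect quasi-line graph. Since $\LS_+$-perfection is inherited by induced subgraphs, every facet-defining induced subgraph $G'\subseteq G$ is again $\LS_+$-perfect and quasi-line. To conclude $\stab(G)=\astab^*(G)$ it suffices to show that each such $G'$ is the support graph of a joined antiweb constraint. I would split according to whether $G$ is a semi-line graph or a fuzzy circular interval graph. In the semi-line case, the Chudnovsky--Seymour description forces every facet-defining induced subgraph to be a clique or the line graph of a 2-connected hypomatchable graph, and Theorem \ref{hipomatch} then reduces the $\LS_+$-perfect possibilities to cliques and odd holes, both of which are antiwebs. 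In the fuzzy circular interval case, Theorem \ref{thm_N+imperfect_fcig} directly gives that every facet-defining $\LS_+$-perfect subgraph is a clique, an odd hole or an odd antihole, again an antiweb. In both cases, $G$ is therefore $a$-perfect, hence joined $a$-perfect, as required.

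The main obstacle here is not in the corollary itself but has already been absorbed into the preceding subsection: namely, Theorem \ref{thm_N+imperfect_fcig}, whose proof draws on the Eisenbrand--Oriolo--Stauffer--Ventura description of $\stab(G)$ by clique family inequalities, Stauffer's theorem associating every facet-defining clique family inequality of a fuzzy circular interval graph with a web, and the characterization of $\LS_+$-perfect webs from Theorem \ref{webs}. Once that machinery is in place, the corollary itself is merely the dichotomy \emph{quasi-line} $=$ \emph{semi-line} $\cup$ \emph{fuzzy circular interval}, together with the one-line reverse direction above, and is essentially immediate.
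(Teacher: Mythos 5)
Your proposal is correct and follows essentially the same route as the paper: the easy direction from the inclusion chain $\stab(G)\subseteq\LS_+(G)\subseteq\astab^*(G)$, and for the hard direction the dichotomy into semi-line graphs (handled via the Chudnovsky--Seymour facet description plus Theorem \ref{hipomatch}) and fuzzy circular interval graphs (handled via Theorem \ref{thm_N+imperfect_fcig}), concluding that all facet-defining $\LS_+$-perfect subgraphs are antiwebs. The paper indeed treats the corollary as an immediate consequence of these two subclass results, exactly as you describe.
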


\subsection{Claw-free graphs that are not quasi-line} 

It is left to treat the case of claw-free graphs that are not quasi-line. Here, we distinguish two cases according to their stability number.

To treat the case of claw-free not quasi-line graphs $G$ with $\alpha(G) \geq 4$, we %strongly 
rely on the decomposition of such graphs into strips, where at most one strip is quasi-line and all the remaining ones are 5-wheel strips \cite{OPS2008}. By noting that 5-wheel strips of type 3 contain $G_{LT}$ and exhibiting $\LS_+$-imperfect line graphs in the other two cases, we are able to show:

\begin{theorem}
\label{thm_not_quasi-line_4}
Every facet-defining claw-free not quasi-line graph $G$ with $\alpha(G) \geq 4$ is $\LS_+$-imperfect.
\end{theorem}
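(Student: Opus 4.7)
The plan is to reduce to the strip decomposition of Oriolo, Pietropaoli and Stauffer~\cite{OPS2008}: every claw-free not quasi-line graph $G$ with $\alpha(G) \geq 4$ admits a decomposition into strips in which at most one strip is quasi-line and every other strip is a 5-wheel strip of stability number at most three. Since $G$ is itself not quasi-line, at least one of the appearing strips must be a 5-wheel strip, so $G$ contains one of the three basic 5-wheel strip types depicted in Figure~\ref{Fig:5wheelStrips} (up to the admissible extensions by neighbors of the central 5-wheel).

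Because $\LS_+$-perfection is hereditary under taking induced subgraphs, it then suffices to show that each of the three basic 5-wheel strip types (and every one of its admissible extensions) contains an induced $\LS_+$-imperfect subgraph. First I would handle the strip of type~3 by displaying an induced copy of $G_{LT}$ inside it; by \cite{LiptakTuncel2003} this already certifies $\LS_+$-imperfection, and the adjacencies allowed in the extensions of type~3 strips do not add any chord that would destroy this $G_{LT}$. For the strips of types~1 and~2 I would go through the line-graph representation of the strip and exhibit, as an induced subgraph, the line graph of a small 2-connected hypomatchable root graph $H$ that is not a clique or an odd hole; by Theorem~\ref{hipomatch} this line graph is $\LS_+$-imperfect, and in each case one in fact recovers a node-stretching of $G_{LT}$ or $G_{EMN}$ in the sense of Theorem~\ref{stretching}.

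Combining the three cases with the $\LS_+$-imperfection of the exhibited subgraph yields $\LS_+$-imperfection of $G$; the facet-defining hypothesis on $G$ is only needed upstream, to guarantee that the decomposition hypotheses of \cite{OPS2008} apply (for instance so that a clique cutset does not trivialize the argument).

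The main obstacle I expect is the bookkeeping of the admissible extensions of the basic 5-wheel strips: one has to verify that the candidate induced $G_{LT}$ (for type~3) and the candidate stretching of $G_{LT}$ or $G_{EMN}$ (for types~1 and~2) really remain \emph{induced} under every allowed addition of neighborhood nodes, i.e.\ that no extra edge is forced between the chosen vertices. This is a finite case analysis dictated by the adjacency rules of the 5-wheel strip extensions from~\cite{OPS2008}, but it is the part that requires actual checking rather than invocation of previous results.
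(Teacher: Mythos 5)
Your reduction to the strip decomposition of \cite{OPS2008} and your treatment of the type~3 strip (finding an induced $G_{LT}$ inside the strip) match the paper. But there is a genuine gap in your plan for strips of type~1 and type~2: you claim it suffices to show that each basic 5-wheel strip type, together with its admissible extensions, \emph{itself} contains an induced $\LS_+$-imperfect subgraph. For types~1 and~2 this is not available -- a 5-wheel is the complete join of a node and $C_5=\overline{C_5}$, hence $\LS_+$-perfect by Theorem~\ref{mnpmascero}, and attaching simplicial nodes does not by itself create an $\LS_+$-imperfect induced subgraph. If the type~1 and type~2 strips contained such subgraphs on their own, the whole theorem would follow as immediately as in the type~3 case, which it does not.

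The paper instead builds the $\LS_+$-imperfect subgraph using vertices \emph{outside} the strip: since $G$ is facet-defining it has no clique cutset (Chv\'atal), so the strip $G'$ (which is proper in $G$ because $\alpha(G')\leq 3<\alpha(G)$) is attached to $G-G'$ at both of its end-cliques, and the two attachment neighborhoods are joined by a path $P$ lying entirely in $G-G'$. The strip together with $P$ induces a node stretching of $G_{EMN}$ or of $G_{LT}$, with the choice depending on the parity of $P$; $\LS_+$-imperfection then follows from Theorem~\ref{stretching}. This also shows that your remark that the facet-defining hypothesis is ``only needed upstream'' to invoke \cite{OPS2008} is off: the decomposition of \cite{OPS2008} applies to all claw-free not quasi-line graphs with $\alpha(G)\geq 4$, and the facet-defining assumption is used downstream, precisely to guarantee the existence of the connecting path $P$ that completes the imperfect configuration. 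To repair your argument you would need to add this path construction and the parity case analysis for types~1 and~2.
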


\begin{proof}%[Theorem \ref{thm_not_quasi-line_4}]
Let $G$ be a facet-defining claw-free not quasi-line graph with $\alpha(G) \geq 4$. 
According to~\cite{OPS2008}, $G$ has a decomposition into strips, where at most one strip is quasi-line and all the remaining ones have stability number at most 3 and contain a 5-wheel each. 
Recall that there are only three types of 5-wheel strips, Fig.~\ref{Fig:5wheelStrips} shows the ``basic'' types, which can be extended by adding nodes belonging to the neighborhood of the 5-wheels \cite{OPS2008}. 

Since $G$ is not quasi-line, it contains at least one 5-wheel strip $G'$. 
If $G'$ is of type 3, then $G'$ contains $G_{LT}$, induced by the squared nodes indicated in Fig.~\ref{Fig:5wheelStrips}, and we are done. Hence, let $G'$ be of type 1 or 2.  

Note further that $G'$ is a proper subgraph of $G$ (by $\alpha(G') \leq 3$ but $\alpha(G) \geq 4$) and connected to $G-G'$ (since $G$ is facet-defining and, thus, cannot have a clique cutset by Chv\'atal \cite{Chvatal1975}).

According to the strip composition, there are nodes in $G-G'$ playing the role of the two simplicial nodes of $G'$ (the two black nodes in Fig.~\ref{Fig:5wheelStrips}), and they are connected by a path $P$ with nodes exclusively in $G-G'$ (again, since $G$ cannot contain a clique cutset). 

If $G'$ is of type 1, then %$G'$ contains the basic 5-wheel strip of type 1 and 
$G$ has, as induced subgraph, a node stretching of $G_{EMN}$ (resp. of $G_{LT}$) if $P$ is even (resp. odd), see the squared nodes in Fig. \ref{Fig:5wheelStrip1}. 
 
\begin{figure}
\begin{center}
\includegraphics[scale=0.8]{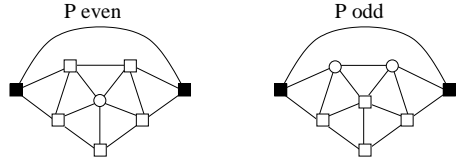}
\caption{$N_+$-imperfect subgraphs if $G'$ is of type 1.}
\label{Fig:5wheelStrip1}
\end{center}
\end{figure}

If $G'$ is of type 2, then %$G'$ contains the basic 5-wheel strip of type 2 and 
$G$ has, as induced subgraph, a node stretching of $G_{LT}$ (resp. of $G_{EMN}$) if $P$ is even (resp. odd), see the squared nodes in Fig. \ref{Fig:5wheelStrip2}. 

\begin{figure}
\begin{center}
\includegraphics[scale=0.8]{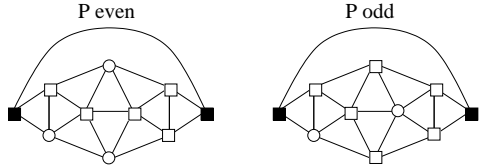}
\caption{$\LS_+$-imperfect subgraphs if $G'$ is of type 2.}
\label{Fig:5wheelStrip2}
\end{center}
\end{figure}

Hence, in all cases, $G$ contains an $\LS_+$-imperfect line graph and is itself $\LS_+$-imperfect. 
$\Box$
\end{proof}

For graphs having stability number three, there is no decomposition known yet. 
Relying only on the behavior of the stable set polytope under clique identification \cite{Chvatal1975} and the result on $\LS_+$-(im)perfect graphs from Theorem \ref{mnpmascero}, we can prove:

\begin{theorem}
\label{thm_not_quasi-line_3}
Every facet-defining claw-free not quasi-line graph $G$ with $\alpha(G) = 3$ is $\LS_+$-imperfect.
\end{theorem}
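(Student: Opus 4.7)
The plan is to argue by contradiction: assume $G$ is $\LS_+$-perfect and extract an induced subgraph $H$ matching the hypothesis of Theorem~\ref{mnpmascero} (with the extra vertex not completely joined), producing an $\LS_+$-imperfect subgraph of $G$ and the desired contradiction.

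First I would locate a 5-wheel in $G$. Non-quasi-lineness yields a vertex $c$ whose neighborhood is not a union of two cliques, and claw-freeness forces $\alpha(N(c)) \le 2$; hence $\overline{G[N(c)]}$ is triangle-free and non-bipartite, so it contains an odd cycle of length at least $5$. The minimum case gives a $C_5 \subseteq N(c)$, equivalently an odd antihole $\overline{C_5}\subseteq N(c)$, so $G$ contains a 5-wheel $W$ with center $c$ and rim $R = \{u_0,\ldots,u_4\}$. By Chv\'atal \cite{Chvatal1975} facet-definingness forbids a clique cutset, so $G$ is $2$-connected and strictly contains $W$ (since $\alpha(W)=2<3=\alpha(G)$).

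Next I would classify vertices outside $R$. Given any $v \in V(G)\setminus R$, if $v$ is not completely joined to $R$ but its non-neighbors in $R$ form a clique of the cycle $R$ (i.e., empty, a single vertex, or an edge), then $H := G[R \cup \{v\}]$ has $\alpha(H)=2$, $H-v = \overline{C_5}$, and $v$ is not completely joined to $H-v$, so Theorem~\ref{mnpmascero} furnishes the required $\LS_+$-imperfect subgraph. Hence, under the contradiction hypothesis, every $v\in V(G)\setminus R$ is either \textbf{(T1)} completely joined to $R$, or \textbf{(T2)} has two non-neighbors at distance two in $R$. Claw-freeness pushes $\alpha\le 2$ on the set of T1-vertices (three pairwise non-adjacent T1-vertices together with any rim vertex would yield a claw), so $\alpha(G[T_1\cup R])\le 2$ and the assumption $\alpha(G)=3$ forces the existence of a T2-vertex $w$, non-adjacent to some pair $u_i,u_{i+2}\in R$.

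The crux is to now combine $w$ with carefully chosen further vertices (additional T1 or T2 vertices, or vertices lying on induced paths whose existence is guaranteed by the absence of a clique cutset in $G$) so as to extract a subgraph $H$ with $\alpha(H)=2$ and $H-v$ an odd antihole for some $v$ that is not completely joined to $H-v$. A single T2-vertex is not enough on its own, because $\{w,u_i,u_{i+2}\}$ already has stability three; one therefore either shrinks $R$ (removing a rim vertex to push the stability back down to $2$) or enlarges $\overline{C_5}$ to a longer odd antihole $\overline{C_{2k+1}}\subseteq G$ that arises from the interplay of T1- and T2-vertices, and then reapplies Theorem~\ref{mnpmascero} with $k\ge 2$. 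Claw-freeness restricts the adjacency pattern of $w$ to $R$ and to the other candidate vertices, and the no-clique-cutset property provides the missing adjacencies or paths.

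The main obstacle is precisely this final extraction step. Because non-adjacency of any T2-vertex $w$ to a distance-two pair in $R$ immediately makes $R\cup\{w\}$ too independent for Theorem~\ref{mnpmascero}, one must genuinely recreate the ``odd antihole plus one non-completely-joined vertex'' template elsewhere in $G$, guided by the structure of claw-free non–quasi-line graphs of stability three. The fine case analysis here plays the role that the strip decomposition of \cite{OPS2008} plays for $\alpha(G)\ge 4$ in Theorem~\ref{thm_not_quasi-line_4}: it is the mechanism through which claw-freeness, non-quasi-lineness, and the absence of a clique cutset are funnelled into a single $\LS_+$-imperfect induced subgraph of $G$.
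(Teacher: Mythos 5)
Your opening moves match the paper's: locate a node $v$ whose neighborhood is not the union of two cliques, find a shortest odd cycle in the complement of $G[N(v)]$ (hence an odd antihole $\overline C$ in $G$), and use Theorem~\ref{mnpmascero} to force every relevant vertex to be completely joined to $\overline C$ on pain of producing an $\LS_+$-imperfect subgraph. But the statement you correctly identify as ``the crux'' --- extracting an $\LS_+$-imperfect subgraph once all such vertices \emph{are} completely joined to $\overline C$ and a genuine third independent vertex exists --- is exactly where the paper's proof does its work (seventeen claims of case analysis on the sets $W$, $X$, $Y_*$, $Y_S$, $Z$), and your proposal leaves it as a plan rather than a proof. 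That on its own makes the proposal incomplete.

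More importantly, the specific strategy you propose for that step is too weak to succeed. You restrict yourself to recreating the template of Theorem~\ref{mnpmascero} (an odd antihole plus one vertex, with stability number $2$), by shrinking the rim or growing a longer antihole. The paper's proof cannot, and does not, get by with that single certificate: in the hard configurations (Claims 15--17 of its proof, e.g.\ the $3$-gear plus an extra vertex, or a vertex of $Z$ attached to two nonadjacent vertices of $Y_S$) the $\LS_+$-imperfect subgraphs produced are $G_{LT}$, $G_{EMN}$, and in particular \emph{node stretchings} of $G_{EMN}$, whose $\LS_+$-imperfection is certified by Theorem~\ref{stretching} (Lipt\'ak--Tun\c{c}el), not by Theorem~\ref{mnpmascero}. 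These stretchings have stability number $3$ and are minimally $\LS_+$-imperfect line graphs, so they contain no induced ``odd antihole plus non-joined vertex'' subgraph at all; a proof that only hunts for that template will simply not terminate on those configurations. To repair your plan you would need to add the stretching theorem (and the $\LS_+$-imperfect line graphs it generates) to your toolkit and then carry out the full structural case analysis --- which is essentially to rewrite the paper's proof.
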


\begin{proof}%[Theorem \ref{thm_not_quasi-line_3}]
Let $G$ be a facet-defining claw-free graph with $\alpha(G) = 3$ that is not quasi-line. 
Then, there is a node $v$ in $G$ such that $G' = G[N(v)]$ cannot be partitioned into 2 cliques. 
Hence, in the complement $\overline G$ of $G$, the subgraph $\overline G'$ cannot be partitioned into two stable sets. 
Thus, $\overline G'$  is non-bipartite and contains an odd cycle. 
Let $C$ be the shortest odd cycle in $\overline G'$. 
Then $C$ is not a triangle (otherwise, $\overline C$ and $v$ induce a claw in $G$). 
Hence, $C$ is an odd hole (because it is an odd cycle of length $\geq 5$, but has no chords according to its choice). 

Therefore, $\overline C$ is an odd antihole in $G$; let $u_1, \ldots, u_{2k+1}$ denote it nodes and $u_iu_{i+k}$ be its non-edges. 
Furthermore, let $G_v = G[N(v) \cup \{v\}]$ and $W = N(v)-\overline C$.

From now on, we will use $\overline C$ to denote both, the node set and the odd antihole when it is clear from the context.
 
\begin{myclaim}\label{claim1}
$G_v$ has stability number 2 and \\[-6mm]
\begin{itemize}
\item either contains an $LS_+$-imperfect subgraph
\item or is the complete join of $v$, $\overline C$ and $W$.
\end{itemize}
\end{myclaim}
\noindent
Firstly, note that $N(v)$ does not contain a stable set of size 3 (otherwise, $G$ clearly contains a claw). 
Hence,  $\alpha(G_v) = 2$ follows. 
If $W = \emptyset$, we are done. 
If there is a node $w \in W$, then for each such node, either $w$ is completely joined to $\overline C$ or else the subgraph of $G$ induced by $w$ and $\overline C$ is $LS_+$-imperfect due to Theorem \ref{mnpmascero}. 
$\Diamond$\\

We are done if $G_v$ is $LS_+$-imperfect. 
Hence, assume in the sequel of this proof that $G_v$ is the complete join of $v$, $\overline C$ and $W$. 
Since $\alpha(G) = 3$ holds, $G_v$ is a proper subgraph of $G$. 
We partition the nodes in $G-G_v$ into 3 different subsets:\\[-6mm]
\begin{itemize}
\item $X$ containing all nodes from $G-G_v$ having a neighbor in $W$,
\item $Y$ having all nodes from $G-G_v$ having no neighbor in $W$, but a neighbor in~$\overline C$,
\item $Z$ containing all nodes from $G-G_v$ having no neighbor in $W \cup \overline C$.
\end{itemize}

\begin{myclaim}\label{claim2}
Every node $x \in X$ \\[-6mm]
\begin{itemize}
\item either induces together with $\overline C$ an $LS_+$-imperfect subgraph of $G$,
\item or is completely joined to $\overline C$.
\end{itemize}
\end{myclaim}
\noindent
No node $x \in X$ can belong to a stable set $S = \{x,u_i,u_{i+k}\}$ (otherwise, any neighbor $w \in W$ of $x$ induces together with $S$ a claw in $G$). 
Hence, for every $x \in X$, the subgraph $G[\overline C \cup \{x\}]$ has stability number 2 and is either $LS_+$-imperfect or an odd antiwheel by Theorem \ref{mnpmascero}. 
$\Diamond$\\

We are done if some node $x \in X$ yields an $LS_+$-imperfect graph $G[\overline C \cup \{x\}]$. 
Hence, assume in the sequel of this proof that $\overline C$ and $X$ are completely joined. 

\begin{myclaim}\label{claim3}
$X$ is a clique.
\end{myclaim}
\noindent
Otherwise, $G$ contains a claw containing some node $u_i \in \overline C$ as central node, $v$ and two non-adjacent nodes $x,x' \in X$. 
$\Diamond$\\

Let $G_X$ denote the subgraph of $G$ induced by $v$, $N(v)$ and $X$.

\begin{myclaim}\label{claim4}
We have $\alpha(G_X) = 2$.
\end{myclaim}
\noindent
We know already that $\alpha(G_v) = 2$ by Claim 1. 
If $G_X$ contains a stable set $S$ of size 3, then $x \in S$ for some $x \in X$. 
This implies $S \cap \overline C = \emptyset$ (recall that we assume that $X$ and $\overline C$ are completely joined). 
In addition, $v \not\in S$ (since $v$ is adjacent to all nodes in $W$ (so we would have $S \cap W = \emptyset$ if $v \in S$), but $S$ cannot contain 2 nodes from $X$ (since $X$ is a clique by Claim 3)). 
Finally, $S$ cannot contain 2 non-adjacent nodes $w,w' \in W$ (otherwise, any node $u_i \in \overline C$ induces with $S$ a claw in $G$). 
Hence, there is no such stable set $S$ in $G_X$. 
$\Diamond$\\

By $\alpha(G) = 3$ and $\alpha(G_X) = 2$, there is a node in $Y \cup Z$. 
We conclude that $Y \neq \emptyset$ (otherwise, $X$ would constitute a clique cutset of $G$, separating $Z$ from $G'$, a contradiction to $G$ facet-defining by Chv\'atal \cite{Chvatal1975}.

\begin{myclaim}\label{claim5}
$W$ induces a clique.
\end{myclaim}
\noindent
Otherwise, $G$ contains a claw induced by some node $y \in Y$, a neighbor $u_i \in \overline C$ of $y$, and two non-adjacent nodes $w,w' \in W$. 
$\Diamond$\\

Hence, %either $G$ is $N_+$-imperfect or
$G_v$ is in fact the complete join of a clique $Q = \{v\} \cup W$ and $\overline C$. 
In addition, $X$ is a clique and completely joined to $\overline C$, $Y$ is non-empty, and there is no edge between $Q$ and $Y$. We further obtain:

\begin{myclaim}\label{claim6}
Every node $y \in Y$ is completely joined to $X$.
\end{myclaim}
\noindent
Otherwise, $G$ contains a claw induced by some node $y \in Y$, a neighbor $u_i \in \overline C$ of $y$, $v$ and a non-neighbor $x \in X$ of $y$. %%%or else: $X$ is empty
$\Diamond$\\

Note that, according to Theorem \ref{mnpmascero}, each node $y \in Y$ has three possibilities for its connections to $\overline C$:\\[-6mm]
\begin{itemize}
\item either $y$ induces together with $\overline C$ an $LS_+$-imperfect subgraph of $G$, 
\item or $y$ is completely joined to $\overline C$, 
\item or $y$ belongs to a stable set $S_y = \{y,u_i,u_{i+k}\}$ 
\end{itemize}
(recall that, by Theorem \ref{mnpmascero}, whenever $\{y\} \cup \overline C$ has stability number 2, it is either $LS_+$-imperfect or an odd antiwheel).
If a node $y \in Y$ gives rise to an $LS_+$-imperfect subgraph of $G$, we are done. 
Hence, assume in the sequel of this proof that $Y$ is partitioned into two subsets $Y_*$ and $Y_S$ containing all nodes $y$ that are completely joined to $\overline C$ resp. belong to a stable set $S_y = \{y,u_i,u_{i+k}\}$.
We next show: 

\begin{myclaim}\label{claim7}
$Y_S \neq \emptyset$.
\end{myclaim}
\noindent
Assume to the contrary that we have $Y = Y_*$. 
Then, $Y$ also induces a clique (otherwise, there is a claw in $G$ induced by $v$, some node $u_i \in \overline C$ and two non-adjacent nodes $y,y' \in Y$). %%%note: $Y$ is also a clique if it contains only 1 node.
This implies that $G[G_v \cup X \cup Y]$ has stability number 2 (by $\alpha(G_X) = 2$ due to Claim 4, $Y$ completely joined to $X$ due to Claim 6, and $Y = Y_*$ clique).
Thus, $Z$ is non-empty (because $\alpha(G) = 3$). 
Hence, $G$ contains a clique cutset $X \cup Y$, separating $Z$ from $G_v$ (recall that every node in $Z$ has only neighbors in $X$ or $Y$, but not in $G_v$), a contradiction to $G$ facet-defining by Chv\'atal \cite{Chvatal1975}. 
Therefore, we conclude that $Y = Y_*$ cannot hold. 
$\Diamond$\\

Having ensured the existence of a stable set $S_y = \{y,u_i,u_{i+k}\}$ in $G$, we next observe:

\begin{myclaim}\label{claim8}
$X = \emptyset$.
\end{myclaim}
\noindent
Otherwise, $G$ contains a claw induced by $S_y$ and any node $x \in X$ (recall: we assume that $X$ and $\overline C$ are completely joined (otherwise, $G$ is $LS_+$-imperfect by Claim 2), and have that $X$ and $Y$ are completely joined by Claim 6). 
$\Diamond$\\

This implies particularly that no node outside $G_v$ has a neighbor in $W$. 
We next study the connections between $\overline C$ and $Y$ in more detail and obtain the following important fact:

\begin{myclaim}\label{claim9}
$\overline C = C_5$ and each node $y \in Y_S$ has exactly two consecutive neighbors on $\overline C$.
\end{myclaim}
\noindent
Consider some node $y \in Y_S$ and the stable set $S_y = \{y,u_i,u_{i+k}\}$. 
By construction of $Y$, $y$ has a neighbor $u_j \in \overline C$. 
This node $u_j$ (and any further neighbor of $y$ in $\overline C$) cannot be a common neighbor of $u_i$ and $u_{i+k}$ (otherwise, $u_j$ induces together with $S_y$ a claw in $G$). Hence, $u_j$ equals either $u_{i-1}$ (which is not adjacent to $u_{i+k}$) or else $u_{i+k+1}$ (which is not adjacent to $u_{i}$). 
W.l.o.g., say that $y$ has $u_{i-1}$ as neighbor in $\overline C$. 
Then $\overline C = C_5$ follows because for any $k \geq 3$, the graph induced by $y$ and $\overline C$ contains a claw with center $u_{i-1}$ and the nodes $u_{i+1},u_{i+k+2},y$ (or else $u_{i+1}$ or $u_{i+k+2}$ induce with $S_y$ a claw if $y$ is adjacent to $u_{i+1}$ or $u_{i+k+2}$). 

Moreover, we observe that $y$ is also adjacent to $u_{i-2}$ (otherwise, there is a claw with center $u_{i-1}$ and the nodes $u_{i},u_{i-2},y$). %%%we have $\overline C = C_5$!
This shows the assertion that each node $y \in Y_S$ has exactly two consecutive neighbors on $\overline C = C_5$.
$\Diamond$\\

We next observe:

\begin{myclaim}\label{claim10}
$Z$ induces a clique.
\end{myclaim}
\noindent
Otherwise, $G$ contains a stable set of size 4, consisting of two non-adjacent nodes in $\overline C$ and two non-adjacent nodes in $Z$ (recall: by definition of $Z$, there is no edge between $\overline C$ and $Z$). 
$\Diamond$\\

Hence, so far we have the following: %either $G$ is $N_+$-imperfect or
$G_v$ is the complete join of a clique $Q = \{v\} \cup W$ and $\overline C = C_5$. 
$G-G_v$ is partitioned into two subsets $Y$ and $Z$ %%%recall: $X$ is empty
where\\[-6mm]
\begin{itemize}
\item $Y$ is non-empty and partitions into two subsets $Y_*$ and $Y_S$ consisting of all nodes $y$ that are completely joined to $\overline C = C_5$ resp. belong to a stable set $S_y = \{y,u_i,u_{i+k}\}$ and have exactly two consecutive neighbors on the $C_5$;
\item $Z$ induces a clique and no node in $Z$ has a neighbor in $G_v$. 
\end{itemize}
We continue to explore the composition of $Y$ and its connections to $\overline C = C_5$:

\begin{myclaim}\label{claim11}
If two nodes $y,y' \in Y$ share a same neighbor $u_j \in \overline C$, then $y$ and $y'$ are adjacent.
\end{myclaim}
\noindent
Otherwise, $G$ contains a claw with center $u_j$ and the nodes $v,y,y'$. 
$\Diamond$

\begin{myclaim}\label{claim12}
There are at least two nodes in $Y_S$.
\end{myclaim}
\noindent
By Claim 7, there is a node $y \in Y_S$. 
Then $Y \neq \{y\}$ follows (otherwise, the only two and consecutive neighbors of $y$ on $\overline C = C_5$ (by Claim 9) form a clique cutset in $G$, separating $y$ from $Q$, a contradiction to $G$ facet-defining by Chv\'atal \cite{Chvatal1975}). 
If all nodes from $Y - \{y\}$ belong to $Y_*$, then $Y$ induces a clique by Claim 11 (because all share a common neighbor in $\overline C$), and $Y - \{y\}$ together with the two and consecutive neighbors of $y$ on $\overline C$ form a clique cutset in $G$, separating $y$ from $Q$, again a contradiction to $G$ facet-defining. 
Hence, $Y_S$ contains at least two nodes.
$\Diamond$\\

Using similar arguments, we next show:

\begin{myclaim}\label{claim13}
Not all nodes in $Y_S$ have the same two consecutive neighbors on $\overline C$. 
\end{myclaim}
\noindent
Otherwise, $G$ has a clique cutset (consisting of $Y_*$ and the two common, consecutive neighbors on $\overline C$ of all nodes in $Y_S$), separating $Y_S$ from $Q$, again a contradiction to $G$ facet-defining. 
$\Diamond$

\begin{myclaim}\label{claim14}
If all nodes in $Y_S$ share a common neighbor $u_i$ on $\overline C$, then $G$ contains $G_{EMN}$ as induced subgraph. 
\end{myclaim}
\noindent
By assumption, we have only three types of nodes in $Y$: 
nodes $y$ with $N_{\overline C}(y) = \{u_{i-1},u_i\}$, 
nodes $y'$ with $N_{\overline C}(y') = \{u_i,u_{i+1}\}$, and 
nodes $y_* \in Y_*$. 
$Y$ induces a clique (by Claim 11) and $Z = \emptyset$ follows (otherwise, $Y$ is a clique cutset separating $Z$ from $Q$, again a contradiction to $G$ facet-defining). 
Claim 12 combined with Claim 13 shows that there is at least one node $y$ adjacent to $u_{i-1},u_i$ and at least one node $y'$ adjacent to $u_i,u_{i+1}$. 
Moreover, there is also at least one node $y_* \in Y_*$ (otherwise, $G$ equals the gear (induced by $v$, $\overline C$, $y$ and $y'$, see Figure \ref{Fig_N+P_alpha3_claim14}) with possible replications of $v$, $y$, $y'$ and is not facet-defining, a contradiction). 
Hence, $G$ contains $G_{EMN}$ (induced by $y_*$ and the nodes $v$, $u_{i-1}$, $u_{i+1}$, $y$, and $y'$, see Figure \ref{Fig_N+P_alpha3_claim14}). 
$\Diamond$\\

\begin{figure}
\begin{center}
\includegraphics[scale=0.8]{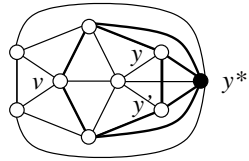}
\caption{Subgraph induced by $\overline C$ and $v$, $y$, $y'$, $y_*$ (removing $y_*$ yields the gear, the bold edges indicate the $G_{EMN}$).}
\label{Fig_N+P_alpha3_claim14}
\end{center}
\end{figure}

This shows that $G$ is either $LS_+$-imperfect (and we are done) or $Y_S$ has two nodes with distinct neighbors on $\overline C$. Let us assume the latter. 

\begin{myclaim}\label{claim15}
If two nodes $y y' \in Y_S$ with distinct neighbors on $\overline C$ are adjacent, then $G$ contains $G_{LT}$ as induced subgraph. 
\end{myclaim}
\noindent
W.l.o.g., let $N_{\overline C}(y) = \{u_{1},u_2\}$ and $N_{\overline C}(y') = \{u_3,u_{4}\}$. 
If $y$ is adjacent to $y'$, then $u_1$, $y$, $y'$, $u_4$, $u_5$ induce together with $v$ a $G_{LT}$. 
$\Diamond$\\

Hence, assume that no two nodes in $y y' \in Y_S$ with distinct neighbors on $\overline C$ are adjacent (otherwise, we are done).

\begin{myclaim}\label{claim16}
If $Z \neq \emptyset$, then $G$ contains %an $N_+$-imperfect line graph 
a node stretching of $G_{EMN}$ as induced subgraph.
\end{myclaim}
\noindent
If there is a node $z \in Z$, then $z$ is adjacent to every node in $Y_S$ (otherwise, there is a node $y \in Y_S$ such that $z$ together with $S_y$ forms a stable set of size 4). 
Recall that we assume that $Y_S$ contains two non-adjacent nodes $y$, $y'$ with distinct neighbors on $\overline C$. 
%W.l.o.g., let again $N_{\overline C}(y) = \{u_{1},u_2\}$ and $N_{\overline C}(y') = \{u_3,u_{4}\}$. 
Then $z$ together with $y$, $y'$ and $\overline C$ induce a node stretching of $G_{EMN}$.
$\Diamond$\\

Hence, we are done if $Z \neq \emptyset$ (because $G$ contains an $LS_+$-imperfect subgraph). 
So let us assume $Z = \emptyset$ from now on. 

Furthermore, assume w.l.o.g. that $y$ with $N_{\overline C}(y) = \{u_{1},u_2\}$ and $y'$ with $N_{\overline C}(y') = \{u_3,u_{4}\}$ is a pair of nodes in $Y_S$ having distinct neighbors on $\overline C$. 
Since $G$ is facet-defining (and, thus, without clique cutset), there is a path connecting $y$ and $y'$; let $P$ denote the shortest such path.

Note that $P$ has length $\geq 2$ (recall: $y$ and $y'$ are supposed to be non-adjacent, otherwise $G$ contains a $G_{LT}$ by Claim 15). 

\begin{myclaim}\label{claim17}
If $P$ has length $2$, then $G$ is $LS_+$-imperfect.
\end{myclaim}
\noindent
So, let $P$ have length $2$ and denote by $t$ its only internal node. 
Then $t \in Y$ follows (by $Z = \emptyset$ and because there is no common neighbor of $y$ and $y'$ in $\overline C$). 
We conclude that $t \not\in Y_*$ holds (otherwise, $G$ has a claw induced by $t$ and $S_y$) and that $t$ and $u_5$ are non-adjacent (otherwise, $G$ has a claw induced by $t$ and $y$, $y'$, $u_5$). 
In fact, $N_{\overline C}(t) = \{u_{2},u_3\}$ follows by our assumption that no two nodes in $Y_S$ with distinct neighbors on $\overline C$ are adjacent (otherwise, $G$ contains a $G_{LT}$ by Claim 15). 

Since the graph induced by $G_v$ together with $y,t,y'$, called a 3-gear (see Figure \ref{Fig_alpha3_claim17}), is not facet-defining, there must be another node $y''$ in $Y$ (recall: we have $Z = \emptyset$). 
We are done if there is a node $y'' \in Y_*$ (because $G$ contains a $G_{EMN}$ induced by $u_1, y, t, u_3, v$ and $y''$ in this case). %%%recall: $y''$ must be adjacent to $y,t,y'$
Hence, assume $y'' \in Y_S$. 

W.l.o.g., let $N_{\overline C}(y'') = \{u_{4},u_5\}$ (note: the case $N_{\overline C}(y'') = \{u_{1},u_5\}$ is symmetric, and in all other cases, $G$ is still a 3-gear with some replicated nodes, thus not facet-defining). 
Then $y''$ and $y'$ are adjacent by Claim 11. 
If $y''$ is also adjacent to $y$ or $t$, then we are done since then $G$ contains a $G_{LT}$ by Claim 15. 
If $y''$ is neither adjacent to $y$ nor to $t$, then $G$ contains an $LS_+$-imperfect line graph induced by $u_1, y, t, u_3, v$ and $y',y'',u_3$ (being a node stretching of $G_{EMN}$). 
Note that $G$ still contains one of the above $LS_+$-imperfect subgraphs if $G$ contains more nodes than considered so far. 
$\Diamond$\\

\begin{figure}
\begin{center}
\includegraphics[scale=0.6]{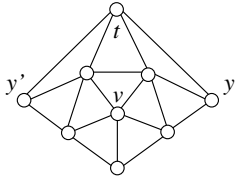}
\caption{Subgraph induced by $\overline C$ and $v$, $y$, $y'$, $t$, called a 3-gear.}
\label{Fig_alpha3_claim17}
\end{center}
\end{figure}

Thus, we are done if $P$ has length $2$. 
Let us finally assume that $P$ has length $\geq 3$ and $y$ and $y'$ have no common neighbor (recall: $P$ is a shortest path connecting them). 

Then $Y_* = \emptyset$ follows (because each node $y'' \in Y_*$ shares a common neighbor with $y$ and $y'$ on $\overline C$ and is, thus, adjacent to both $y$ and $y'$ by Claim 11, a contradiction to the choice of $P$ as shortest path connecting them). 

Moreover, there is a neighbor $\overline y$ of $y$ in $Y_S$ (otherwise, $\{u_{1},u_2\}$ forms a clique cutset separating $y$ from $Q$, a contradiction to $G$ facet-defining by Chv\'atal \cite{Chvatal1975}). 
In addition, $N_{\overline C}(\overline y)$ is different from $\{u_{3},u_4\}$ and from $\{u_{4},u_5\}$ (otherwise, $y$ and $y'$ would be a pair of nodes with distinct neighbors on $\overline C$ and connected by a path of length 2, hence $G$ is $LS_+$-imperfect by Claim 17 and we are done). 

We clearly have $N_{\overline C}(\overline y) \neq \{u_{2},u_3\}$ (otherwise, $\overline y$ adjacent to $y'$ follows by Claim 11 and we have again a pair of nodes with distinct neighbors on $\overline C$ and connected by a path of length 2, hence $G$ is $LS_+$-imperfect by Claim 17 and we are done). 

If there is no node in $Y_S$ having $\{u_{1},u_5\}$ as neighbors on $\overline C$, then $\{u_{1},u_2\}$ forms still a clique cutset separating $y$ from $Q$, again a contradiction. 
Hence, let $N_{\overline C}(\overline y) = \{u_{1},u_5\}$. 

Then, there is no node in $Y_S$ having $\{u_{4},u_5\}$ as neighbors on $\overline C$ (this node $y''$ would be adjacent to $\overline y$ by Claim 11, so that $\overline y$ would be a common neighbor of $y$ and $y''$, leading to an $LS_+$-imperfect subgraph of $G$ by Claim 17  and we are done). 
Similarly, there is no node in $Y_S$ having $\{u_{3},u_4\}$ as neighbors on $\overline C$.

This finally implies that $\{u_{3},u_4\}$ is a clique cutset separating $y'$ from $Q$, a contradiction to $G$ facet-defining by Chv\'atal \cite{Chvatal1975}). 
That all further nodes of $G$ are either replicates of $y$, $y'$ or $\overline y$ (and $\{u_{3},u_4\}$ remains a clique cutset in all cases) finishes the proof. 
$\Box$
\end{proof}

%%%%%%%%%%%%%%%%%%%%%%%%

Hence, the only facet-defining subgraphs $G'$ of $\LS_+$-perfect claw-free not quasi-line graphs $G$ with $\alpha(G) = 3$ have $\alpha(G') = 2$ and are, by Theorem \ref{thm_N+imperfect_alpha2}, cliques, odd antiholes or their complete joins. 
We conclude that $\LS_+$-perfect facet-defining claw-free not quasi-line graphs $G$ with $\alpha(G) = 3$ are joined $a$-perfect and, thus, the $\LS_+$-Perfect Graph Conjecture is true for this class.
%
%\begin{corollary}\label{Cor_not_quasi-line_3}
%The $N_+$-Perfect Graph Conjecture is true for claw-free not quasi-line graphs $G$ with $\alpha(G) = 3$.
%\end{corollary}

This together with Theorem \ref{thm_not_quasi-line_3} shows that the only facet-defining subgraphs $G'$ of $\LS_+$-perfect claw-free not quasi-line graphs $G$ with $\alpha(G) \geq 4$ have $\alpha(G') = 2$ and are, by Theorem \ref{thm_N+imperfect_alpha2}, cliques, odd antiholes or their complete joins. Thus, every $\LS_+$-perfect claw-free not quasi-line graph $G$ with $\alpha(G) \geq 4$ is joined a-perfect and, thus, the $\LS_+$-Perfect Graph Conjecture holds true for this class.
%
%\begin{corollary}\label{Cor_not_quasi-line_4}
%The $N_+$-Perfect Graph Conjecture is true for claw-free not quasi-line graphs $G$ with $\alpha(G) \geq 4$.
%\end{corollary}

Combining Corollary \ref{Cor_alpha2_2} with the above results shows that all $\LS_+$-perfect claw-free but not quasi-line graphs are joined $a$-perfect and we obtain:

\begin{corollary}\label{Cor_not_quasi-line}
The $\LS_+$-Perfect Graph Conjecture is true for all claw-free graphs that are not quasi-line.
\end{corollary}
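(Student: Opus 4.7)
The plan is to assemble the three preceding theorems (Theorems~\ref{thm_N+imperfect_alpha2}, \ref{thm_not_quasi-line_3}, and~\ref{thm_not_quasi-line_4}) together with the already-proven quasi-line case (Corollary~\ref{Cor_quasi-line}) into a single argument that runs over the facet-defining induced subgraphs of $G$.

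First I would dispose of the ``easy'' direction: $\stab(G) = \astab^*(G)$ implies $\LS_+$-perfection, which is immediate from the containment \eqref{LS1}. The content is the converse, so the task reduces to showing that every $\LS_+$-perfect claw-free not quasi-line graph $G$ satisfies $\stab(G) \subseteq \astab^*(G)$, equivalently, that every facet of $\stab(G)$ is a joined antiweb constraint. Since each facet of $\stab(G)$ arises as the full-support facet of $\stab(G')$ for some facet-defining induced subgraph $G' \subseteq G$, it is enough to show that every such $G'$ is joined $a$-perfect. Note that $G'$ is automatically claw-free (as an induced subgraph of $G$) and $\LS_+$-perfect (since $\LS_+$-perfection is hereditary), although $G'$ may or may not be quasi-line even if $G$ is not.

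Next I would split into cases according to whether $G'$ itself is quasi-line. If $G'$ is quasi-line, Corollary~\ref{Cor_quasi-line} directly gives that $G'$ is joined $a$-perfect, so its full-support facet is a joined antiweb constraint. If $G'$ is not quasi-line, I would further split on $\alpha(G')$. For $\alpha(G') = 2$, Theorem~\ref{thm_N+imperfect_alpha2} forces $G'$ to be an odd antihole or a complete join of odd antihole(s) with a (possibly empty) clique, each of which is a complete join of antiwebs whose full-support facet is exactly the corresponding joined antiweb constraint. The remaining cases $\alpha(G') = 3$ and $\alpha(G') \geq 4$ are ruled out: Theorems~\ref{thm_not_quasi-line_3} and~\ref{thm_not_quasi-line_4}, respectively, would force $G'$ to be $\LS_+$-imperfect, contradicting the $\LS_+$-perfection of $G$ (and hence of $G'$).

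Combining the two cases, every facet-defining induced subgraph of $G$ contributes a joined antiweb facet, so $\stab(G) = \astab^*(G)$ as required. The main ``obstacle'' here is really only bookkeeping: verifying that the case distinction (quasi-line versus not, crossed with the three $\alpha$-ranges) is exhaustive and that each cited theorem applies cleanly to $G'$ rather than to $G$ itself. No additional combinatorics is needed, since all the hard structural work has already been carried out in Theorems~\ref{thm_not_quasi-line_3} and~\ref{thm_not_quasi-line_4}.
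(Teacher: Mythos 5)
Your proposal is correct and follows essentially the same route as the paper: reduce to the facet-defining induced subgraphs $G'$ of an $\LS_+$-perfect claw-free not quasi-line graph, rule out $\alpha(G')=3$ and $\alpha(G')\geq 4$ for non-quasi-line $G'$ via Theorems~\ref{thm_not_quasi-line_3} and~\ref{thm_not_quasi-line_4}, classify the $\alpha(G')=2$ case via Theorem~\ref{thm_N+imperfect_alpha2}, and invoke the quasi-line results otherwise, concluding that every facet is a joined antiweb constraint. If anything, you are slightly more careful than the paper's own (rather terse) wrap-up, since you explicitly treat facet-defining subgraphs $G'$ that are quasi-line even though $G$ is not.
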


Finally, we obtain our main result (Theorem \ref{thm_main}) as direct consequence of Corollary \ref{Cor_quasi-line} and Corollary \ref{Cor_not_quasi-line}:
The $\LS_+$-Perfect Graph Conjecture is true for all claw-free graphs.

\section{Conclusion and future research}

The context of this work was the study of 
$\LS_+$-perfect graphs, i.e., graphs where a single application of the Lov\'asz-Schrijver PSD-operator $\LS_+$ to the edge relaxation yields the stable set polytope. 
Hereby, we are particularly interested in finding an appropriate polyhedral relaxation $P(G)$ of $\stab(G)$ that coincides with $\LS_+(G)$ and $\stab(G)$ if and only if $G$ is $\LS_+$-perfect. 
An according conjecture has been recently formulated ($\LS_+$-Perfect Graph Conjecture); here we verified it for the well-studied class of claw-free graphs (Theorem \ref{thm_main}).

For that, it surprisingly turned out that it was not necessary to make use of the description of STAB$(G)$ for claw-free not quasi-line graphs $G$ 
\begin{itemize}
\item with $\alpha(G) = 2$ (by Cook, see~\cite{Shepherd1994}),
\item with $\alpha(G) = 3$ (by P\^echer, Wagler \cite{PecherWagler2010}),
\item with $\alpha(G) \geq 4$ (by Galluccio, Gentile, Ventura \cite{GGV2008,GGV2014a,GGV2014b}).
\end{itemize}
%}

From the presented results and proofs, we can draw some further conclusions. 
First of all, we can determine the subclass of joined $a$-perfect graphs to which all $\LS_+$-perfect claw-free graphs belong to. 
In \cite{KosterWagler_IRII}, it is suggested to call a graph $G$ \emph{$m$-perfect} if the only facets of $\stab(G)$ are associated with cliques and minimally imperfect graphs. According to \cite{CPW_2009}, $G$ is \emph{joined $m$-perfect} if $\stab(G)$ is given only by facets associated with cliques, minimally imperfect graphs and their complete joins. 
Theorem \ref{hipomatch} together with the results from Section \ref{results} provide the complete list of all facet-defining $\LS_+$-perfect claw-free graphs: 
\begin{itemize}
\item cliques,
\item odd holes and odd antiholes,
\item complete joins of odd antihole(s) and a (possibly empty) clique.
\end{itemize}
Hence, we conclude:
\begin{corollary}
All $\LS_+$-perfect claw-free graphs are joined $m$-perfect.
\end{corollary}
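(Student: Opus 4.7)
The plan is to read the corollary directly off the classification of facet-defining $\LS_+$-perfect claw-free graphs that has just been compiled. First, I would recall that every facet of $\stab(G)$ is either a nonnegativity inequality or, by zero-lifting, arises from a full-support facet of $\stab(G')$ for some induced subgraph $G'\subseteq G$; such a $G'$ is by definition facet-defining. Since being $\LS_+$-perfect and being claw-free are both properties closed under taking induced subgraphs, any facet-defining subgraph $G'$ of an $\LS_+$-perfect claw-free graph $G$ is itself a facet-defining $\LS_+$-perfect claw-free graph.

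Second, I would invoke the list assembled just before the corollary by combining Theorem~\ref{hipomatch} (facet-defining line graphs), Theorem~\ref{thm_N+imperfect_fcig} (fuzzy circular interval graphs, and hence, with the Chudnovsky--Seymour description of semi-line graphs, all quasi-line graphs), Theorem~\ref{thm_N+imperfect_alpha2} (stability number two), and Theorems~\ref{thm_not_quasi-line_4} and \ref{thm_not_quasi-line_3} (claw-free graphs that are not quasi-line, which have no facet-defining $\LS_+$-perfect subgraphs of stability number $\geq 3$). Together these force $G'$ to be a clique, an odd hole, an odd antihole, or a complete join of one or more odd antiholes with a (possibly empty) clique.

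Finally, I would apply the Strong Perfect Graph Theorem as recorded in (\ref{equivalencias}): the minimally imperfect graphs are exactly the odd holes and odd antiholes. Therefore every facet of $\stab(G)$ is associated with a clique, with a minimally imperfect graph, or with a complete join of cliques and minimally imperfect graphs, which is precisely the definition of joined $m$-perfect from~\cite{CPW_2009}. The main point here is bookkeeping rather than a hidden obstacle; all the real work has been absorbed into the preceding theorems, and the corollary reduces to recognizing the already-established list in the vocabulary of joined $m$-perfection.
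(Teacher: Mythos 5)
Your proposal is correct and follows essentially the same route as the paper: the paper likewise compiles the list of facet-defining $\LS_+$-perfect claw-free graphs (cliques, odd holes, odd antiholes, and complete joins of odd antiholes with a possibly empty clique) from Theorems \ref{hipomatch}, \ref{thm_N+imperfect_alpha2}, \ref{thm_N+imperfect_fcig}, \ref{thm_not_quasi-line_4} and \ref{thm_not_quasi-line_3}, and then reads off joined $m$-perfection from the definition. Your explicit remarks on zero-lifting, heredity of the two graph properties, and the identification of minimally imperfect graphs with odd holes and antiholes are just the bookkeeping the paper leaves implicit.
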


Among these possible facets, only complete joins of odd antihole(s) and a non-empty clique are non-rank. This directly implies:
\begin{corollary}
A rank-perfect $\LS_+$-perfect claw-free graph has as only facet-defining subgraphs cliques, odd holes, odd antiholes, or complete joins of the latter.
\end{corollary}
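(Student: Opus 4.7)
The plan is to deduce this corollary as an immediate consequence of the preceding corollary, which provides the full catalogue of facet-defining $\LS_+$-perfect claw-free graphs, combined with the observation already made about which entries in that catalogue correspond to rank constraints.

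First I would fix a rank-perfect $\LS_+$-perfect claw-free graph $G$ and any facet-defining induced subgraph $G'$ of $G$. Since both claw-freeness and $\LS_+$-perfection are hereditary, $G'$ is itself an $\LS_+$-perfect claw-free facet-defining graph. The preceding corollary therefore forces $G'$ to be a clique, an odd hole, an odd antihole, or the complete join of some odd antihole(s) $A_1,\ldots,A_s$ and a (possibly empty) clique $Q$.

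Next I would invoke the observation made in the paragraph immediately before the statement: the joined antiweb inequality attached to such a complete join reads
\[
\tfrac{1}{2}\sum_{i=1}^{s} x(A_i) + x(Q) \leq 1,
\]
which has two distinct nonzero left-hand-side coefficients exactly when $Q\neq\emptyset$, and otherwise collapses to the rank inequality $x(V(G'))\leq 2$. Hence within the catalogue, the non-rank members are precisely the complete joins of odd antihole(s) with a non-empty clique, while complete joins of odd antiholes alone remain rank.

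Finally I would close by using rank-perfectness to rule out the non-rank case. A full-support facet of $\stab(G')$ lifts (with zero coefficients on $V(G)\setminus V(G')$) to a facet of $\stab(G)$ carrying the same coefficient pattern on $V(G')$; if that pattern is non-rank, so is the lifted facet of $\stab(G)$. Since $G$ is rank-perfect, no such non-rank facet can appear, and hence $G'$ cannot be a complete join of odd antihole(s) with a non-empty clique. The only surviving possibilities are cliques, odd holes, odd antiholes, and complete joins of odd antiholes, as claimed. There is no real obstacle here; the argument is pure bookkeeping, the only point requiring care being the routine verification that non-rank facets of an induced subgraph lift to non-rank facets of the ambient graph.
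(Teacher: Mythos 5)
Your argument is correct and is essentially the paper's own (one-sentence) deduction: combine the preceding corollary's catalogue of facet-defining $\LS_+$-perfect claw-free graphs with the observation that, among these, only the complete joins of odd antihole(s) with a \emph{non-empty} clique yield non-rank facets, which rank-perfectness excludes. One minor imprecision: a full-support facet of $\stab(G')$ need not lift with \emph{zero} coefficients on $V(G)\setminus V(G')$ --- sequential lifting may assign positive coefficients there --- but since the coefficients on $V(G')$ are preserved, the lifted facet of $\stab(G)$ still carries two distinct nonzero values and is therefore non-rank, so your conclusion stands.
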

Note that Galluccio and Sassano provided in \cite{GalluccioSassano97} a complete characterization of the rank facet-defining claw-free graphs: they either belong to one of the following three families of rank-minimal graphs
\begin{itemize}
\item cliques, 
\item partitionable webs $W^{\omega - 1}_{\alpha \omega + 1}$ (where $\alpha$ and $\omega$ stand for stability and clique number, resp.), 
\item line graphs of minimal 2-connected hypomatchable graphs $H$ (where $H-e$ is not hypomatchable anymore for any edge $e$), 
\end{itemize}
or can be obtained from them by means of two operations, sequential lifting and complete join.
Our results show: an $\LS_+$-perfect claw-free graph $G$ has, besides cliques, only odd holes and odd antiholes as rank-minimal subgraphs; cliques are the only subgraphs in $G$ that can be sequentially lifted to larger rank facet-defining subgraphs, where complete joins can only be taken of odd antiholes.

Note further that, besides verifying the $\LS_+$-Perfect Graph Conjecture for claw-free graphs, we obtained %, as a by-product, 
the complete list of all minimally $\LS_+$-imperfect claw-free graphs. 
In fact, the results in \cite{BENT2011,EN2014,ENW2014} imply that the following graphs are minimally $\LS_+$-imperfect:
\begin{itemize}
\item 
%near-perfect
graphs $G$ with $\alpha(G)=2$ such that $G-v$ is an odd antihole for some node $v$, not completely joined to $G-v$,
\item the web $W_{10}^2$,
\item $\LS_+$-imperfect line graphs (which are all node stretchings of $G_{LT}$ or $G_{EMN}$). 
\end{itemize}
Our results from Section \ref{results} on facet-defining $\LS_+$-perfect claw-free graphs imply that they are the only minimally $\LS_+$-imperfect claw-free graphs.

Finally, the subject of the present work has parallels to the well-developed research area of perfect graph theory also in terms of polynomial time computability. In fact, it has the potential of reaching even stronger results due the following reasons. Recall that calculating the value $$\eta_+(G) = \max \uno^Tx, x \in \LS_+(G)$$ can be obtained with arbitrary precision in polynomial  time for every graph $G$, even in the weighted case, by \cite{LovaszSchrijver1991}. Thus, the stable set problem can be solved in polynomial time for a strict superset of perfect graphs, the $\LS_+$-perfect graphs, by $\alpha(G) = \eta_+(G)$. 
Hence, our future lines of research include to find 
\begin{itemize}
\item new families of graphs where the conjecture holds (e.g., by characterizing the minimally $\LS_+$-imperfect graphs within the class), 
\item new subclasses of $\LS_+$-perfect or joined a-perfect graphs, 
\item classes of graphs $G$ where $\stab(G)$ and $\LS_+(G)$ are ``close enough'' to have $\alpha(G) = \lfloor \eta_+(G) \rfloor$.
\end{itemize}
In particular, the class of graphs $G$ with $\alpha(G) = \lfloor \eta_+(G) \rfloor$ can be expected to be large since $\LS_+(G)$ is a much stronger relaxation of $\stab(G)$ than $\thbo(G)$.
In all cases, the stable set problem could be approximated with arbitrary precision in polynomial time in these graph classes by optimizing over $\LS_+(G)$. Finally, note that $\LS_+(P(G))$ with
\[
\stab(G) \subseteq P(G) \subseteq \estab(G)
\]
clearly gives an even stronger relaxation of $\stab(G)$ than $\LS_+(G)$. 
However, already approximating with arbitrary precision over $\LS_+(\qstab(G))$ cannot be done in polynomial time anymore for all graphs $G$ by \cite{LovaszSchrijver1991}. 
Hence,  $\LS_+$-perfect graphs or their generalizations satisfying $\alpha(G) = \lfloor \eta_+(G) \rfloor$ are the most promising cases in this context.

\end{document}